\documentclass{amsart}

\usepackage{amssymb}
\usepackage{amsmath}
\usepackage{amsfonts}
\usepackage{graphicx}
\usepackage{amsthm}
\usepackage{enumerate}
\usepackage{esint}
\usepackage[mathscr]{eucal}
\usepackage{mathrsfs}
\usepackage{mathtools}
\usepackage{verbatim}
\usepackage{physics}
\usepackage{geometry}
\usepackage{anyfontsize} %允许字体连续缩放

\usepackage[
    colorlinks=true,
    urlcolor=blue,
    citecolor=blue,
    linkcolor=red,
    linktocpage,
    pdfpagelabels,
    bookmarksnumbered,
    bookmarksopen,
    pagebackref
]{hyperref} %生成引用链接，注：该宏包可能与其他宏包冲突，故放在所有引用的宏包之后
\usepackage[capitalize]{cleveref} %实现图片和表格、公式的引用，cleveref包必须放在hyperref包之后

\renewcommand*{\backrefalt}[4]{%
    \ifcase #1\relax
    \else
        \unskip\space #2%
    \fi
}

\crefformat{equation}{#2(#1)#3} %句中单个公式
\Crefformat{equation}{#2(#1)#3} %句首单个公式

\crefmultiformat{equation} %句中
  {#2(#1)#3} % 第一个
  { and #2(#1)#3} % 恰好两个时的第二个
  {, #2(#1)#3} % 三个以上时的中间项
  {, and #2(#1)#3} % 三个以上时的最后一项

\Crefmultiformat{equation} %句首
  {#2(#1)#3} % 第一个
  { and #2(#1)#3} % 恰好两个时的第二个
  {, #2(#1)#3} % 三个以上时的中间项
  {, and #2(#1)#3} % 三个以上时的最后一项

\crefrangeformat{equation}{#3(#1)#4--#5(#2)#6} %单个连续区间
\Crefrangeformat{equation}{#3(#1)#4--#5(#2)#6} %单个连续区间

\crefrangemultiformat{equation}
  {#3(#1)#4--#5(#2)#6}
  { and #3(#1)#4--#5(#2)#6}
  {, #3(#1)#4--#5(#2)#6}
  {, and #3(#1)#4--#5(#2)#6}

\Crefrangemultiformat{equation}
  {#3(#1)#4--#5(#2)#6}
  { and #3(#1)#4--#5(#2)#6}
  {, #3(#1)#4--#5(#2)#6}
  {, and #3(#1)#4--#5(#2)#6}

\makeatletter
\@namedef{subjclassname@2010}{%
  \textup{2010} Mathematics Subject Classification}
\makeatother

\numberwithin{equation}{section}

\theoremstyle{plain}
\newtheorem{theorem}{Theorem}[section]
\newtheorem{lemma}[theorem]{Lemma}
\newtheorem{proposition}[theorem]{Proposition}
\newtheorem{corollary}[theorem]{Corollary}

\theoremstyle{definition}
\theoremstyle{plain}

\theoremstyle{remark}
\newtheorem{remark}[theorem]{Remark}

\numberwithin{figure}{section}

\DeclareMathOperator{\diag}{diag}

\DeclareMathOperator{\vv}{\mathbf{V}}
\DeclareMathOperator{\nn}{\mathbf{n}}
\DeclareMathOperator{\hh}{H}

\newcommand{\B}{\mathbb{B}}
\newcommand{\ints}{\int_{\mathbb{S}^{d-1}}}
\newcommand{\muu}{\widehat{\mu}}
\newcommand{\D}{\mathcal{D}(\Omega)}
\newcommand{\A}{\mathcal{A}(\Omega)}
\newcommand{\s}{\mathcal{S}(\Omega)}

\begin{document}
% \date{\today} 

\title{Spectral and Geometric Stability for the Reciprocal Sum of Neumann Eigenvalues}
	
	\author{Yulong Li, Zuoqin Wang}
	\thanks{Partially supported by   NSFC no. 12571064.} 
\address[Y.L.]{School of Mathematical Sciences\\
University of Science and Technology of China\\
Hefei, 230026\\ P.R. China\\} 
\email{liyulon@mail.ustc.edu.cn}
 
\address[Z.W.]{School of Mathematical Sciences\\
University of Science and Technology of China\\
Hefei, 230026\\ P.R. China\\} 
\email{wangzuoq@ustc.edu.cn}

\begin{abstract}
    We establish quantitative stability for the reciprocal-sum isoperimetric inequality for the first $d$ nonzero Neumann eigenvalues, recently proved by He, Li, and Tang. We prove that for bounded Lipschitz domains in $\mathbb{R}^d$, the reciprocal-sum deficit controls quadratically both the normalized eigenvalue displacements and the Fraenkel asymmetry, while controlling the displacement of the eigenvalue sum linearly. A further result shows that the classical Szeg\H{o}--Weinberger deficit controls both the gap between the first two nonzero eigenvalues and the full width of the first eigenvalue cluster. Nearly spherical perturbations demonstrate that all the stability exponents are optimal. In dimension two, the same matrix method yields improved constraints on the joint spectral image of the first two nonzero eigenvalues.
\end{abstract}

\maketitle
%%%%%%%%%%%%%%%%%%%%%%%%%%%%%%%%%%%%%%%%%%%%%%%%%%%%%%%%%%%%%%%%%%%%%%%%%%%%%%%%%%%%%%%%%%%%%

% content
% \tableofcontents

\section{Introduction}
\subsection{Background}
Let $d\geq 2$. For any bounded Lipschitz domain $\Omega\subset\mathbb{R}^d$, its Neumann eigenvalues are denoted by
\[
    0=\mu_0(\Omega)<\mu_1(\Omega)\leq\mu_2(\Omega)\leq\dots\nearrow +\infty.
\]
The optimization of low nonzero Neumann eigenvalues under a volume constraint is a classical problem in spectral geometry. In the planar simply connected setting, Szeg\H{o} \cite{Sze54} proved that the disk maximizes the first nonzero eigenvalue. Weinberger \cite{Wein56} subsequently removed the dimensional and topological restrictions. More precisely, if $\B$ denotes the unit ball and $|\B|=\omega_d$, the Szeg\H{o}--Weinberger inequality reads
\[
    |\Omega|^{\frac{2}{d}}\mu_1(\Omega)\leq |\B|^{\frac{2}{d}}\mu_1(\B),
\]
with equality if and only if $\Omega$ is a ball.

The first nonzero Neumann eigenvalue of a ball has multiplicity $d$. It is therefore natural to seek an isoperimetric inequality that reflects the whole first eigenspace rather than only its lowest eigenvalue. This leads to the reciprocal-sum problem
\[
    \frac{1}{|\Omega|^{\frac{2}{d}}}\sum_{i=1}^d\frac{1}{\mu_i(\Omega)}\geq \frac{1}{|\B|^{\frac{2}{d}}}\frac{d}{\mu_1(\B)}.
\]
As already observed by Szeg\H{o} and Weinberger (see \cite[p. 634]{Wein56}), Szeg\H{o}'s conformal-mapping argument also yields this reciprocal-sum inequality among simply connected domains in dimension $2$. Ashbaugh and Benguria \cite{AB93N} conjectured in 1993 that the same sharp inequality holds in every dimension. Xia and Wang \cite{XW23} later proved the corresponding result for the sum of the reciprocals of the first $d-1$ nonzero Neumann eigenvalues. More recently, He, Li, and Tang \cite{HLT26+} established the full $d$-term inequality and characterized the equality case. The key step in their proof is to apply Hersch's variational principle to the $d$-dimensional trial space transplanted from the first eigenspace of the ball, thereby reducing the problem to a matrix inequality.

To simplify the scaling, set
\begin{equation}
    \label{eq:notationmuu}
    R_\Omega=\left(\frac{|\Omega|}{\omega_d}\right)^{\frac{1}{d}},\qquad \muu_i(\Omega)=R_\Omega^2\mu_i(\Omega),
\end{equation}
and define the normalized reciprocal-sum deficit by
\begin{equation}
    \label{eq:notationD}
    \D=\sum_{i=1}^d\frac{1}{\muu_i(\Omega)}-\frac{d}{\muu_1(\B)}.
\end{equation}
Although stated for smooth domains, the result in \cite{HLT26+} remains valid, with the same proof, for bounded Lipschitz domains. With the notation above, it takes the following form.

\begin{theorem}[{\cite[Theorem 1.2]{HLT26+}}]
    \label{thm:HLT}
    Let $\Omega\subset\mathbb{R}^d$ be any bounded Lipschitz domain. Then
    \[
        \D\geq 0.
    \]
    Equality holds if and only if $\Omega$ is a ball.
\end{theorem}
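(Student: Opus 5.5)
We follow the strategy of He, Li, and Tang: apply Hersch's variational principle to a $d$-dimensional trial space transplanted from the first eigenspace of the ball, which turns the statement into a trace inequality for a positive semidefinite matrix. By scaling we may assume $|\Omega|=\omega_d$, so $R_\Omega=1$ and $\muu_i=\mu_i$.

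\textbf{Trial space.} Let $g(r)$ be the radial profile of the first nonzero Neumann eigenfunctions of $\B$. Thus $\mu_1(\B)$ has eigenfunctions $g(|x|)x_j/|x|$, where $g$ solves the Bessel-type ODE with $g'(1)=0$. Extend $g$ by $g(r)=g(1)$ for $r\ge 1$ and set $P_j(x)=g(|x|)x_j/|x|$. A Brouwer fixed point argument (Weinberger's trick) lets us translate $\Omega$ so that $\int_\Omega P_j=0$ for all $j$. The functions $P_j$ then lie in $H^1(\Omega)$ and are orthogonal to constants; here Lipschitz regularity is only used to guarantee that the Neumann spectrum is discrete and that the variational characterization holds.

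\textbf{Matrix inequality.} Consider the Gram matrices $A=(\int_\Omega P_iP_j)$ and $B=(\int_\Omega\nabla P_i\cdot\nabla P_j)$. Hersch's principle for the sum of reciprocals states that for any $d$-dimensional subspace of $H^1$ orthogonal to constants,
\[
\sum_{i=1}^d\frac1{\mu_i(\Omega)}\ \ge\ \operatorname{tr}(B^{-1}A),
\]
with the inequality read appropriately if $B$ is degenerate. This is the dual (max) form of the Ky Fan principle for the compact inverse operator. Writing the pointwise quantities, we have
\[
\sum_j P_j^2=g^2,\qquad \sum_j|\nabla P_j|^2=g'^2+(d-1)g^2/r^2.
\]
So $\operatorname{tr}A$ and $\operatorname{tr}B$ are radial integrals over $\Omega$. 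However, $\operatorname{tr}(B^{-1}A)$ is not determined by the traces alone. The key step is therefore to bound $\operatorname{tr}(B^{-1}A)$ below by a rotation-invariant quantity.

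\textbf{Main obstacle.} I would first try to use rotations: replacing $P$ by $QP$ conjugates $A$ and $B$. Averaging over $Q\in SO(d)$ does not change $\operatorname{tr}(B^{-1}A)$, so a different device is needed. One could use the convexity of $(A,B)\mapsto\operatorname{tr}(B^{-1}A)$, which is jointly convex as a matrix-fractional function for $A\succeq0$. Combined with the freedom to choose the subspace, this would give a lower bound by the value at the averaged pair $(\tfrac{\operatorname{tr}A}{d}I,\tfrac{\operatorname{tr}B}{d}I)$, namely $d\operatorname{tr}A/\operatorname{tr}B$. The difficulty is that joint convexity gives an upper bound for averages, not a lower bound, since $\operatorname{tr}(B^{-1}A)$ is invariant along the orbit. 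So the actual approach must exploit more structure.

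I would try to decompose $B=B_0+B_1$ with $B_0$ scalar-like. One route is to show that $\operatorname{tr}(B^{-1}A)\ge d\operatorname{tr}A/\operatorname{tr}B$ fails in general, but that the specific pointwise relation between $|\nabla P|^2$ and $|P|^2$ yields the inequality $\operatorname{tr}(B^{-1}A)\ge d^2/\operatorname{tr}(A^{-1}B)$. The AM-HM inequality for the eigenvalues of $A^{-1/2}BA^{-1/2}$ gives exactly this. It therefore remains to show
\[
\operatorname{tr}(A^{-1}B)\le d\,\mu_1(\B),
\]
and here lies the real obstacle. I would attack it by writing the orthogonal decomposition of $\nabla P_j$ into radial and angular parts and comparing with $A$ via the monotonicity of $g'^2+(d-1)g^2/r^2$ and of $g^2$ in $r$. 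This comparison is the step of Weinberger's mass-transplantation argument, which moves mass of $\Omega$ to $\B$ using $|\Omega|=|\B|$.

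\textbf{Equality case.} Equality forces equality in the mass-transplantation step, which forces $\Omega$ to coincide with the ball up to a null set. Since $\Omega$ is a Lipschitz domain, it is then a ball.
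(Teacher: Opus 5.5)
\textbf{There is a genuine gap, at exactly the step you flag as the obstacle.} After AM--HM you need $\Tr(A^{-1}B)\le d\mu_1(\B)$, i.e.\ that the Ritz values $p_1,\dots,p_d$ of Weinberger's trial space sum to at most $d\lambda$, where $\lambda=\mu_1(\B)$. The mass-transplantation comparison cannot give this, and the inequality is in fact false.

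The monotonicity of $G$, $G'$ and $G/r$ yields precisely $A\ge M=aI+bZ$ and $B\le N=\lambda aI-bZ$. Here $Z=\int_\Omega\theta\theta^T\,\dd x-\int_\B\theta\theta^T\,\dd x$ is trace-free. So at best
\[
\Tr(A^{-1}B)\le\Tr(M^{-1}N)=d\lambda+\frac{(\lambda+1)b^2}{a}\sum_{i=1}^d\frac{z_i^2}{a+bz_i}.
\]
The right side exceeds $d\lambda$ whenever $Z\neq0$, because $z\mapsto\frac{\lambda a-bz}{a+bz}$ is convex and $\sum_i z_i=0$.

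This excess is real, not an artifact of the estimate. Take $\partial\Omega_t=\{(1+t\phi(\theta))\theta\}$ with $\phi$ a degree-$2$ spherical harmonic, normalized to volume $\omega_d$. Then $Z\approx tW$ with $W=\int_{\mathbb{S}^{d-1}}\phi\,\theta\theta^T\,\dd\sigma\neq0$. The discarded remainder $N-B$ has trace only $\approx(d-1)b\,t^2\int_{\mathbb{S}^{d-1}}\phi^2\,\dd\sigma$. Hence
\[
\Tr(A^{-1}B)-d\lambda\approx\frac{b\,t^2}{a}\Big(\frac{(\lambda+1)b}{a}\|W\|_{\mathrm{HS}}^2-(d-1)\int_{\mathbb{S}^{d-1}}\phi^2\,\dd\sigma\Big),
\]
which is positive; for $d=2$ and $\phi=\cos2\alpha$ it is about $14.8\,t^2$. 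The Ritz values split linearly in $t$. AM--HM then discards a term of order $t^2$ (about $2.0\,t^2$ in this example), which is larger than the true margin $\Tr(B^{-1}A)-\frac{2}{\lambda}\approx0.7\,t^2$.

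Note also that your target, combined with $\mu_i\le p_i$, would give $\sum_{i\le d}\mu_i(\Omega)\le d\mu_1(\B)$. That is a strictly stronger statement than the theorem.

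\textbf{The missing idea.} Stay with $\Tr(B^{-1}A)$ and compare at the matrix level, using that the same trace-free $Z$ enters the two bounds with opposite signs. From $0<B\le N$ you get $B^{-1}\ge N^{-1}$, so $\Tr(B^{-1}A)\ge\Tr(N^{-1}A)\ge\Tr(N^{-1}M)$. Diagonalizing $Z$,
\[
\Tr(N^{-1}M)=\sum_{i=1}^d\frac{a+bz_i}{\lambda a-bz_i}=\frac{d}{\lambda}+\frac{(\lambda+1)b^2}{\lambda^2a}\sum_{i=1}^d\frac{z_i^2}{\lambda a-bz_i}\ge\frac{d}{\lambda}.
\]
Here convexity works in your favour, and the first-order terms cancel because $\Tr Z=|\Omega|-|\B|=0$.

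For the equality case: equality forces $Z=0$ and $A=aI$. Taking traces gives $\int_\Omega G^2=\int_\B G^2$. Since $|\Omega|=|\B|$ and $G$ is strictly increasing on $(0,1)$, this gives $|\B\setminus\Omega|=0$, so $\Omega$ is the ball.

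\textbf{Smaller point.} Your convexity aside is wrong on two counts. First, $(A,B)\mapsto\Tr(B^{-1}A)$ is not jointly convex; already $(a,b)\mapsto a/b$ is not. Second, if it were jointly convex, Jensen along the rotation orbit would give exactly the lower bound $d\Tr A/\Tr B$ you wanted, not an upper bound.
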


This theorem characterizes the zero-deficit case, but leaves open the corresponding stability question: what spectral and geometric information can be inferred when $\D$ is small? The purpose of this paper is to address this question by establishing several quantitative stability estimates.

\subsection{Main results}
Our first result shows that the reciprocal-sum deficit controls the entire cluster formed by the first $d$ nonzero Neumann eigenvalues. In particular, a small deficit forces every eigenvalue in this cluster to be close to the corresponding eigenvalue of the ball.

\begin{theorem}
    \label{thm:recsummui}
    There exists an explicit dimensional constant $\gamma_d>0$ such that, for every bounded Lipschitz domain $\Omega\subset\mathbb{R}^d$,
    \begin{equation}
        \label{eq:inversesumSWdeficit}
        \D \geq \gamma_d \max_{1\leq i\leq d} \big(\muu_i(\Omega)-\muu_i(\B)\big)^2.
    \end{equation}
    The constant $\gamma_d$ is given in \eqref{eq:gammad}.
\end{theorem}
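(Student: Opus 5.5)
We go back into the He--Li--Tang proof of Theorem~\ref{thm:HLT} and keep the terms they discard. After scaling, assume $|\Omega|=\omega_d$, so that $R_\Omega=1$ and $\muu_i=\mu_i$. Let $g(r)$ be the radial profile of the first Neumann eigenfunctions of $\B$, extended by the constant $g(1)$ for $r>1$, and set $\phi_j(x)=g(|x|)x_j/|x|$ for $j=1,\dots,d$. By the Weinberger centring argument (Brouwer fixed point), after a translation we may take each $\phi_j$ orthogonal to constants in $L^2(\Omega)$. Define the Gram matrix $G=(\int_\Omega\phi_j\phi_k)$ and the energy matrix $E=(\int_\Omega\nabla\phi_j\cdot\nabla\phi_k)$. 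Hersch's principle (Ky Fan type) then gives, for the trial space spanned by the $\phi_j$,
\[
\sum_{i=1}^d\frac1{\mu_i(\Omega)}\ \ge\ \operatorname{tr}(GE^{-1}),
\]
and more precisely $\sum_{i\le k}\mu_i\le$ the sum of the $k$ smallest eigenvalues of the pencil $(E,G)$ for every $k$.

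The HLT argument shows that $\operatorname{tr}(GE^{-1})\ge d/\mu_1(\B)$. It uses rearrangement: $\int_\Omega g^2\ge\int_\B g^2$, and the radial part of the energy is at most its value on the ball. My plan is to write
\[
\D=\Big[\sum_i \mu_i^{-1}-\operatorname{tr}(GE^{-1})\Big]+\Big[\operatorname{tr}(GE^{-1})-d/\mu_1(\B)\Big],
\]
where both brackets are nonnegative. From this I want a \emph{first-order} spectral bound of the form $\mu_i(\Omega)\le\mu_1(\B)+C\,\D$ for each $i\le d$, which takes care of the positive side. For a lower bound, the reciprocal sum itself helps. Since $\mu_1\le\mu_i\le\mu_d$ and $\mu_1\le\mu_1(\B)$ by Szeg\H{o}--Weinberger, every term satisfies
\[
\frac1{\mu_i}\ge\frac1{\mu_d}\ge\frac{1}{\mu_1(\B)+C\D}
\]
once the upper bound is known. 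Hence, for each $i$,
\[
\D\ \ge\ \frac1{\mu_i}-\frac1{\mu_1(\B)}-(d-1)\Big(\frac1{\mu_1(\B)}-\frac1{\mu_1(\B)+C\D}\Big).
\]
This gives $\mu_1(\B)-\mu_i\lesssim\D$. Combined with the upper bound, $|\mu_i-\mu_i(\B)|\lesssim\D$. Since $|\mu_i-\mu_i(\B)|$ is bounded a priori (by Szeg\H{o}--Weinberger together with the upper bound, or more crudely by Kröger-type bounds), a linear control implies the quadratic one in \eqref{eq:inversesumSWdeficit} with some explicit $\gamma_d$. However, the abstract says the eigenvalue displacements are controlled only quadratically and that this is sharp, so a linear bound in both directions for every $i$ is too strong, and the argument has to be weaker.

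The realistic plan is therefore as follows. The function $x\mapsto 1/x$ is strictly convex with second derivative at least $2/\mu_{\max}^3$ on the relevant range. Write $\mu_i=\mu_1(\B)+\delta_i$. The reciprocal sum gives
\[
\D\ \ge\ -\frac{1}{\mu_1(\B)^2}\sum_i\delta_i+c\sum_i\delta_i^2 ,
\]
where $c$ depends on an a priori upper bound for $\mu_d$. Here the universal inequality $\mu_d(\Omega)\le C_d$ for normalized volume (Kröger) supplies that bound explicitly. What remains is the linear estimate
\[
\sum_i\delta_i\le 0,\qquad\text{i.e.}\qquad \sum_{i=1}^d\mu_i(\Omega)\le d\,\mu_1(\B),
\]
a ``sum'' Szeg\H{o}--Weinberger inequality, or at least $\sum_i\delta_i\le C'\D$ with a sign that can be absorbed. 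The trace form $\operatorname{tr}(G^{-1}E)\ge\sum_i\mu_i$ from the same trial space, combined with the HLT rearrangement bounds on $E$ and $G$ (the energy is bounded by the ball's energy and the Gram matrix dominates the ball's Gram matrix in trace), should yield $\sum\mu_i\le d\mu_1(\B)+C\D$. Once this holds, $\D(1+C/\mu_1(\B)^2)\ge c\max_i\delta_i^2$, which is \eqref{eq:inversesumSWdeficit} with
\[
\gamma_d=\frac{c}{1+C\mu_1(\B)^{-2}} .
\]

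The main obstacle is that last linear step. One has to bound $\sum_i\mu_i$ from above by the same matrix quantities whose discrepancy from the ball is measured by $\D$. That requires comparing $\operatorname{tr}(G^{-1}E)$ with $\operatorname{tr}(GE^{-1})$ via an AM--HM or Cauchy--Schwarz inequality for positive matrices. I would first try to show that $G$ and $E$ are both close to scalar matrices with controlled ratio, using the HLT inequality chain with its slack quantified.
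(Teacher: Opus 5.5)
There is a genuine gap. Your closing reduction is valid, and it orders the argument differently from the paper. Normalize $|\Omega|=\omega_d$ and write $\lambda=\mu_1(\B)$. The exact identity
\[
\D=-\frac{1}{\lambda^{2}}\bigl(\s-\mathcal{S}(\B)\bigr)+\frac{1}{\lambda^{2}}\sum_{i=1}^{d}\frac{(\muu_i(\Omega)-\lambda)^{2}}{\muu_i(\Omega)},
\]
together with the universal bound $\muu_i\le K_d$ (Kr\"oger, Bucur--Henrot), shows that any one-sided estimate $\s-\mathcal{S}(\B)\le C'\D$ implies \eqref{eq:inversesumSWdeficit}. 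That linear estimate, however, is the whole content of the theorem. You leave it unproved, and the route you sketch for it cannot prove it.

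The rearrangement facts you attribute to He--Li--Tang are $\int_\Omega g^2\ge\int_\B g^2$ and energy at most the ball's energy. In matrix form these say $\Tr A\ge da$ and $\Tr B\le d\lambda a$ for your Gram and energy matrices (the paper's $A,B$). That is Weinberger's trace argument. It gives $\mu_1\le\lambda$ but says nothing about $\Tr(A^{-1}B)$, because trace bounds allow $A$ to have an arbitrarily small eigenvalue.

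Nor can an AM--HM or Cauchy--Schwarz comparison of $\Tr(A^{-1}B)=\sum p_i$ with $\Tr(B^{-1}A)=\sum p_i^{-1}$ help. AM--HM bounds $\sum p_i$ from \emph{below}. More decisively, any inequality valid for all positive pairs is tested by $A=I$, $B=\diag(p_1,p_2)$ with $p_1=\lambda-\epsilon$ and $p_2=\lambda(\lambda-\epsilon)/(\lambda-2\epsilon)$. This pair has $p_1^{-1}+p_2^{-1}=2/\lambda$ exactly, while $p_1+p_2-2\lambda=2\epsilon^2/(\lambda-2\epsilon)>0$ and $p_2-p_1>2\epsilon$. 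So the slack in the HLT chain must be quantified at the level of the matrices, not their Ritz values. Also, only your hedged target $\sum_i\delta_i\le C'\D$ is plausible. The paper's numerics in the remark after \cref{thm:mu2} (ratio $\approx2.45>2$) indicate that $\muu_1+\muu_2>2\muu_1(\B)$ occurs in the plane.

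The missing idea is the He--Li--Tang comparison with a \emph{common trace-free} correction. Monotonicity of $G$, $G'$ and $G/r$ gives $A\ge M:=aI+bZ$ and $B\le N:=\lambda aI-bZ$, where $Z=\int_\Omega\theta\theta^T-\int_\B\theta\theta^T$ satisfies $\Tr Z=0$ and $Z\ge-\frac{\omega_d}{d}I$. Because $\Tr Z=0$, the linear terms cancel on both sides:
\[
\Tr(N^{-1}M)-\frac{d}{\lambda}=\frac{(\lambda+1)b^{2}}{\lambda^{2}a}\sum_{i=1}^{d}\frac{z_i^{2}}{\lambda a-bz_i},\qquad \Tr(M^{-1}N)-d\lambda=\frac{(\lambda+1)b^{2}}{a}\sum_{i=1}^{d}\frac{z_i^{2}}{a+bz_i}.
\]
Combined with $\D\ge\Tr(B^{-1}A)-d/\lambda\ge\Tr(N^{-1}M)-d/\lambda$, the first identity gives $\D\ge c_3\|Z\|_{\mathrm{HS}}^{2}$; this is \cref{lem:tech}. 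When $\D\le\varepsilon_0$, this forces $M\ge\frac a2 I$. The second identity then gives
\[
\s-\mathcal{S}(\B)\le\Tr(A^{-1}B)-d\lambda\le\Tr(M^{-1}N)-d\lambda\le\frac{2(\lambda+1)b^2}{a^2}\|Z\|_{\mathrm{HS}}^2\le C'\D.
\]
When $\D>\varepsilon_0$, the bound $\muu_d\le K_d$ suffices. With this input your convexity argument closes, though with an explicit constant different from \eqref{eq:gammad}.

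The paper uses the same control $\D\ge c_3\|Z\|^2$ in another way. It bounds the top Ritz value directly, $p_d-\lambda\lesssim\|Z\|\lesssim\D^{1/2}$, and recovers the lower bound for $\muu_1$ from the reciprocal sum. It proves the sum estimate only afterwards.
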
 

As an immediate consequence, the deficit also controls the splitting of the multiple first eigenvalue of the ball.

\begin{corollary}
    \label{cor:recsumspecgap}
    For every bounded Lipschitz domain $\Omega\subset\mathbb{R}^d$,
    \begin{equation}
        \label{eq:inversesumspecgap}
        \muu_d(\Omega)-\muu_1(\Omega) \leq \frac{2}{\sqrt{\gamma_d}}\sqrt{\D}.
    \end{equation}
\end{corollary}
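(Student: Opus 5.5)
The corollary follows directly from Theorem~\ref{thm:recsummui} by the triangle inequality, using that all $d$ eigenvalues of the ball in the first cluster coincide. First I note that $\muu_1(\B)=\muu_2(\B)=\dots=\muu_d(\B)$, since the first nonzero Neumann eigenvalue of the unit ball has multiplicity $d$ (with $R_\B=1$). Write $\lambda:=\muu_1(\B)$ for this common value.

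Next I apply \eqref{eq:inversesumSWdeficit} separately with $i=1$ and with $i=d$. Taking square roots gives
\[
    |\muu_d(\Omega)-\lambda|\leq \sqrt{\D/\gamma_d},\qquad |\muu_1(\Omega)-\lambda|\leq \sqrt{\D/\gamma_d}.
\]
Here $\D\geq 0$ by Theorem~\ref{thm:HLT}, so the square root is legitimate.

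Finally, by the triangle inequality,
\[
    \muu_d(\Omega)-\muu_1(\Omega)\leq |\muu_d(\Omega)-\lambda|+|\lambda-\muu_1(\Omega)|\leq \frac{2}{\sqrt{\gamma_d}}\sqrt{\D},
\]
which is \eqref{eq:inversesumspecgap}.

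There is no real obstacle here. The only point to check is the multiplicity of $\muu_1(\B)$, which is exactly what makes the two one-sided bounds share the same center $\lambda$. One could refine the argument by noting that $\muu_1(\Omega)\leq\lambda$ by Szeg\H{o}--Weinberger, but this is not needed for the constant $2/\sqrt{\gamma_d}$.
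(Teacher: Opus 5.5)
Your proposal is correct and is essentially the paper's argument: the paper bounds $\big(\muu_d(\Omega)-\muu_1(\Omega)\big)^2\leq 2\big(\muu_1(\Omega)-\muu_1(\B)\big)^2+2\big(\muu_d(\Omega)-\muu_d(\B)\big)^2\leq \frac{4}{\gamma_d}\D$. Your direct triangle inequality after taking square roots gives the same constant $2/\sqrt{\gamma_d}$, using the same two applications of Theorem~\ref{thm:recsummui} and the fact that $\muu_1(\B)=\cdots=\muu_d(\B)$.
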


The behavior of the sum of this eigenvalue cluster is different: its displacement is controlled linearly rather than quadratically. Set
\[
    \s=\sum_{i=1}^d\muu_i(\Omega).
\]

\begin{theorem}
    \label{thm:sumstability}
    There exists an explicit dimensional constant $\tau_d>0$ such that for any bounded Lipschitz domain $\Omega\subset\mathbb{R}^d$ we have
    \begin{equation}
        \label{eq:sumstability}
        \D \geq \tau_d \left|\s - \mathcal{S}(\B) \right|.
    \end{equation}
    The constant $\tau_d$ is given in \eqref{eq:taud}.
\end{theorem}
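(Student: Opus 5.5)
Write $x_i=\muu_i(\Omega)$, $m=\muu_1(\B)$, and $f(x)=1/x$. Since $f$ is convex, the tangent-line inequality $1/x_i\ge 1/m-(x_i-m)/m^2$ holds for each $i$. Summing over $i$ gives
\[
\D\ \ge\ \frac{1}{m^2}\sum_{i=1}^d (m-x_i) \;=\; \frac{\mathcal{S}(\B)-\s}{m^2},
\]
since $\mathcal{S}(\B)=dm$. This immediately handles the case $\s\le\mathcal{S}(\B)$ with constant $1/m^2$. So the whole difficulty lies in the case $\s>\mathcal{S}(\B)$, where the sum exceeds that of the ball. Note that $\D$ does not control large eigenvalues: the reciprocal sum is insensitive to eigenvalues tending to infinity. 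On the other hand, the Szeg\H{o}--Weinberger inequality gives $x_1\le m$, and for large eigenvalues the term $1/x_i$ is small, so the deficit stays bounded below by something positive. I will treat the large and small regimes separately.

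Case $\s>\mathcal{S}(\B)$. By Szeg\H{o}--Weinberger, $x_1\le m$, so
\[
\D\ \ge\ \frac1{x_1}-\frac1m+\sum_{i\ge2}\Big(\frac1{x_i}-\frac1m\Big).
\]
Some of the $x_i$ with $i\ge 2$ must exceed $m$. First I use \cref{thm:recsummui}: it gives $|x_i-m|\le\sqrt{\D/\gamma_d}$ for all $i$, hence
\[
|\s-\mathcal{S}(\B)|\le d\sqrt{\D/\gamma_d}.
\]
This bound is linear in $|\s-\mathcal{S}(\B)|$ once $\D$ is bounded below, but only of square-root type when $\D$ is small. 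To obtain linearity for small $\D$, I go back to the matrix inequality underlying \cref{thm:HLT}. That proof, via Hersch's principle on the transplanted trial space, yields a positive semidefinite $d\times d$ matrix $M$ (a Gram/energy matrix) with $\operatorname{tr}$-type constraints such that
\[
\sum_i \frac{1}{x_i}\ \ge\ \sum_i \frac{1}{\lambda_i(M)},
\]
and the normalization forces $\sum_i\lambda_i(M)\le dm$. I expect the same argument to give the sharper linear bound
\[
\sum_i\frac1{x_i}-\frac dm\ \ge\ c\sum_i (x_i - m)_+ .
\]
The reason is that in the variational characterization the trial functions for the top eigenvalues are controlled by the sum of Rayleigh quotients, whose average is at most $m$. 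Concretely, I would prove $\sum_i x_i\le dm+C\D$ via a Hersch-type estimate: taking the trial space spanned by the transplanted ball eigenfunctions and using the min-max bound on $\sum_i x_i$ (Ky Fan) with weights.

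Small-$\D$ regime, which I expect to be the main obstacle. A second-order expansion is needed here, because the first-order terms from convexity cancel when $\sum_i(x_i-m)=0$. Write $1/x_i-1/m=-(x_i-m)/m^2+(x_i-m)^2/(m^2x_i)$. Then
\[
\D=\frac{\mathcal{S}(\B)-\s}{m^2}+Q, \qquad Q=\sum_i\frac{(x_i-m)^2}{m^2x_i}.
\]
Suppose $\s-\mathcal{S}(\B)=\delta>0$. If $\D\ge\tau\delta$ fails, then $Q\ge\delta/m^2$. Combining this with \cref{thm:recsummui}, we get $Q\lesssim\D/\gamma_d$, hence $\delta\lesssim m^2\D/\gamma_d$, which is exactly linear control. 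So I set
\[
\tau_d=\min\Big\{\frac{1}{m^2},\ \frac{\gamma_d x_{\max}}{C m^2 d}\Big\},
\]
where I bound $x_i$ above using $x_i\le m+\sqrt{\D/\gamma_d}$ in the regime $\D\le1$. The regime $\D\ge 1$ is handled by the crude square-root bound above. The delicate check is that $Q$ is comparable to $\max_i(x_i-m)^2$ uniformly in this regime; this follows since $x_i\ge x_1>0$ is bounded below by Szeg\H{o}--Weinberger only from above, so I also need a lower bound on $x_1$. For that I use $\D\ge 1/x_1-1/m$, so that small $x_1$ forces large $\D$.
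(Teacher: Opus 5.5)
Your route is correct after one repair, and it differs from the paper's. Your treatment of $\s\le\mathcal{S}(\B)$ by the tangent-line inequality for $1/x$ is equivalent to the paper's arithmetic--harmonic mean step and gives the same constant $\muu_1(\B)^{-2}$.

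For $\s>\mathcal{S}(\B)$, the paragraph about returning to the matrix inequality is not an argument, and its claim that the Ritz sum is at most $dm$ is false. Since $\mu_i(\Omega)\le p_i$, that claim would force $\s\le\mathcal{S}(\B)$ for every $\Omega$. But the planar $2{:}1$ rectangle has $\muu_1+\muu_2=\frac{5\pi}{2}>2\muu_1(\B)$. With the paper's $M=aI+bZ$ and $N=\lambda aI-bZ$, one has $\Tr(M^{-1}N)-d\lambda=\frac{b^2(\lambda+1)}{a}\sum_i\frac{z_i^2}{a+bz_i}\ge0$, so the comparison points the wrong way for your claim.

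You do not need that paragraph, because your last paragraph already works once one step is fixed:
\begin{itemize}
\item The inequality $\D\ge\frac1{x_1}-\frac1m$ fails whenever $\sum_{i\ge2}(\frac1{x_i}-\frac1m)<0$; the same rectangle, with $x_2=2\pi>m$, is an example.
\item An upper bound on $x_i$ (your $x_{\max}$) is of no use, since $Q$ involves $1/x_i$.
\end{itemize}
Use instead $\sum_i 1/x_i=\D+d/m$ together with \cref{thm:recsummui}:
\[
\s-\mathcal{S}(\B)=m^2(Q-\D)\le\sum_{i=1}^d\frac{(x_i-m)^2}{x_i}\le\max_i(x_i-m)^2\sum_{i=1}^d\frac1{x_i}\le\frac{\D}{\gamma_d}\Big(\D+\frac dm\Big).
\]
This is linear for $\D\le1$. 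For $\D>1$ one has $\s-\mathcal{S}(\B)\le d\sqrt{\D/\gamma_d}\le d\gamma_d^{-1/2}\D$. Invoking \cref{thm:recsummui} is not circular, since it is proved in Section 3.1 without the sum estimate.

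With this repair, compare the two routes. The paper does not use \cref{thm:recsummui} here. For $\D\le\varepsilon_0$ it bounds $\s-\mathcal{S}(\B)$ by $\Tr(A^{-1}B)-d\lambda\le\Tr(M^{-1}N)-d\lambda$, using Poincar\'e's principle and $A\ge M\ge\frac a2 I$. It then lets $\Tr Z=0$ cancel the first-order term and controls the remaining $\sum_i z_i^2$ by $\D/c_3$ via \cref{lem:tech}. The case $\D>\varepsilon_0$ is handled with the universal bound \eqref{eq:Krogerup}.

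Your argument shows that the linear sum estimate is a purely algebraic consequence of the quadratic estimate for individual eigenvalues. It rests on the exact identity $\D=\frac{\mathcal{S}(\B)-\s}{m^2}+Q$, which the paper uses only in Section 6 for sharpness. This is shorter and black-box, so any improvement of $\gamma_d$ carries over automatically.

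The cost is the constant. You obtain $\min\{m^{-2},\gamma_d/(1+d/m),\sqrt{\gamma_d}/d\}$, about $1.2\times10^{-4}$ for $d=2$, instead of the value \eqref{eq:taud} (about $9.8\times10^{-4}$) that the statement specifies. The loss comes from passing through $\gamma_d$ rather than bounding $\sum_i z_i^2$ directly. So you prove the existence of an explicit $\tau_d$, but not with the constant named in the theorem.
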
 

Thus the reciprocal-sum deficit detects two distinct features of the cluster. It controls the splitting of the multiple eigenvalue quadratically, while controlling the collective displacement of the cluster linearly. The latter estimate provides, for the first nonzero Neumann eigenvalue cluster, an analogue of the Dirichlet cluster estimate established in \cite{BLNP26}, with the reciprocal-sum deficit playing the role of the Faber--Krahn deficit. As a further consequence, our results show that, whenever $|c|$ is sufficiently small, $\sum_{i=1}^{d}\left(\frac{1}{\muu_i} + c \muu_i\right)$ is minimized by the ball.

We next turn to geometric stability. For a measurable set of finite positive measure, define its Fraenkel asymmetry by
\begin{equation}
    \label{eq:Fraenkel}
    \A=\inf\left\{\frac{|\Omega\Delta B|}{|\Omega|}:~B\text{ is a ball and }|B|=|\Omega|\right\}.
\end{equation}
Nadirashvili \cite{Nad97} obtained a sharp stability estimate for the two-dimensional reciprocal-sum inequality among simply connected domains. Brasco and Pratelli \cite{BP12} established the sharp quantitative stability of the Szeg\H{o}--Weinberger inequality; we refer to \cite{BD17} for a broader account of quantitative spectral inequalities. For the reciprocal-sum deficit, we prove the following estimate in every dimension.

\begin{theorem}
    \label{thm:recsumfraenkel}
    There exists an explicit dimensional constant $\kappa_d>0$ such that, for every bounded Lipschitz domain $\Omega\subset\mathbb{R}^d$,
    \begin{equation}
        \label{eq:FraenkelStability}
        \D \geq \kappa_d\A^2.
    \end{equation}
    The constant $\kappa_d$ is given in \eqref{eq:kappad}.
\end{theorem}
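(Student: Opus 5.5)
Our approach is to route the statement through the Brasco--Pratelli sharp quantitative Szeg\H{o}--Weinberger inequality \cite{BP12}. That inequality states that for every bounded Lipschitz (indeed every open, finite-measure) set,
\[
    \muu_1(\B)-\muu_1(\Omega)\geq c_d\,\A^2
\]
for an explicit dimensional constant $c_d>0$. It therefore suffices to show that the reciprocal-sum deficit controls the Szeg\H{o}--Weinberger deficit $\muu_1(\B)-\muu_1(\Omega)$ linearly. Once that is done, \eqref{eq:FraenkelStability} follows with $\kappa_d$ equal to the product of the two constants.

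The key step is a linear lower bound $\D\geq c'_d\big(\muu_1(\B)-\muu_1(\Omega)\big)$. We split the deficit as
\[
    \D=\Big(\frac{1}{\muu_1(\Omega)}-\frac{1}{\muu_1(\B)}\Big)+\sum_{i=2}^d\Big(\frac{1}{\muu_i(\Omega)}-\frac{1}{\muu_i(\B)}\Big).
\]
The first term equals $\frac{\muu_1(\B)-\muu_1(\Omega)}{\muu_1(\Omega)\muu_1(\B)}$. By the Szeg\H{o}--Weinberger inequality this is at least $\muu_1(\B)^{-2}\big(\muu_1(\B)-\muu_1(\Omega)\big)$. The difficulty is that the remaining terms may be negative, since $\muu_i(\Omega)$ for $i\geq2$ can exceed $\muu_1(\B)$.

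We handle this with a case distinction based on Theorem \ref{thm:sumstability} and Theorem \ref{thm:recsummui}, both available earlier in the paper.
\begin{enumerate}[(a)]
\item If $\D$ is not small, say $\D\geq\delta_d$, we use the trivial bound $\A\leq 2$ and obtain $\D\geq\frac{\delta_d}{4}\A^2$.
\item If $\D<\delta_d$, Theorem \ref{thm:recsummui} places every $\muu_i(\Omega)$ within $O(\sqrt{\delta_d})$ of $\muu_1(\B)$. We then Taylor-expand $1/x$ around $m=\muu_1(\B)$ with $h_i=\muu_i(\Omega)-m$:
\[
    \D=-\frac{1}{m^2}\sum_i h_i+O\Big(\sum_i h_i^2\Big).
\]
Theorem \ref{thm:sumstability} controls $|\sum_i h_i|\leq \D/\tau_d$, and Theorem \ref{thm:recsummui} gives $\sum_i h_i^2\leq d\,\D/\gamma_d$.
\end{enumerate}

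Linear control of $-h_1$ alone does not follow from these two bounds directly, because the estimate on $h_1$ from Theorem \ref{thm:recsummui} is only $|h_1|\lesssim\sqrt{\D}$. That gives $\muu_1(\B)-\muu_1(\Omega)\lesssim\sqrt{\D}$, hence only $\A^4\lesssim\D$.

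This is the main obstacle, and a cleverer argument is needed. Our first attempt will revisit the He--Li--Tang matrix argument behind Theorem \ref{thm:HLT}. Hersch's principle applied to the transplanted trial space $\{g(|x|)x_j/|x|\}$ gives
\[
    \sum_i\frac{1}{\muu_i(\Omega)}\geq \operatorname{tr}\big(A B^{-1}\big),
\]
where $A$ and $B$ are the Gram and Dirichlet-energy matrices of the trial functions. We then track the Weinberger-type slack in each diagonal entry: the slack $\int_\Omega G(|x|)\,dx-\int_{B} G(|x|)\,dx$ for the radial weight $G=g^2$ (and its gradient analogue). By the standard Brasco--Pratelli mass-transport argument, this slack is bounded below by $c\,\A^2$, since the weight is strictly monotone near $|x|=R_\Omega$. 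This would give $\D\geq c\,\A^2$ directly, bypassing the spectral reduction, and we expect this direct route to be the one that works. The remaining technical points are that the barycentre normalisation needed for orthogonality to constants can be imposed by a Brouwer/translation argument, and that the matrix inequality loses only a constant factor relative to the trace of the diagonal slacks.
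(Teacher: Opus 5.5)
\textbf{There is a genuine gap: the first route is impossible, and the slack you name in the direct route has the wrong order.}

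\emph{The first route.} It needs a linear bound $\D\gtrsim\muu_1(\B)-\muu_1(\Omega)$, and that bound is false. For the degree-$2$ perturbation $\phi\propto\theta_1^2-\theta_d^2$ in the proof of \cref{thm:sharpness}, one has $\D(\Omega_t)=O(t^2)$ while $\muu_1(\B)-\muu_1(\Omega_t)=t+O(t^2)$. So passing through the Szeg\H{o}--Weinberger deficit gives at best $\D\gtrsim\A^4$, which is where you got stuck.

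\emph{The direct route.} Its architecture is the paper's, but the slack you write down does not give $\A^2$. The mass remainder is $A-M=\int_{\B\setminus\Omega}\big(G(1)^2-G(r)^2\big)\theta\theta^T\,\dd x$. Since $g'(1)=0$ (the Neumann condition defining $p_{\nu,1}$), the weight $G(1)^2-G(r)^2$ vanishes \emph{quadratically} at $r=1$. Strict monotonicity is therefore not enough: the transport argument yields only $|\B\setminus\Omega|^3$. The nearly spherical family behind \cref{eq:Fraenkelsimt} confirms this is real: there $\B\setminus\Omega_t$ lies in a shell of width $O(t)$ and $\A(\Omega_t)\asymp t$, so the slack is $O(t^3)=O(\A^3)$. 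This is the remainder that enters the trace for free, via $\Tr(B^{-1}A)\geq\Tr(N^{-1}M)+\Tr\big(N^{-1}(A-M)\big)$, but it is of the wrong order.

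\emph{The missing idea.} Quadratic control must come from the angular part of the \emph{energy} remainder. Write $B=N-Q$ as in \cref{eq:Q}. Discarding the $G'^2$ term, whose weight also vanishes quadratically, gives $\Tr Q\geq(d-1)\int_{\B\setminus\Omega}\big(G(r)^2/r^2-G(1)^2\big)\,\dd x$. Here the weight vanishes only linearly: \cref{lem:radialremainder}, proved via the Hadamard product for $J_\nu$, gives $G(r)^2/r^2-G(1)^2\geq J_\nu(p_{\nu,1})^2(1-r)$. Rearranging $\B\setminus\Omega$ onto the outer shell $\B\setminus B_{R_0}$ then gives $\int_{\B\setminus\Omega}(1-r)\,\dd x\geq\frac{\omega_d}{8d}\A^2$.

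\emph{Feeding $Q$ into the trace.} Because $Q$ sits in the matrix that gets inverted, your claim that the matrix inequality ``loses only a constant factor'' needs a real argument. From $0\preceq N^{-1/2}QN^{-1/2}\prec I$ one gets $B^{-1}\succeq N^{-1}+N^{-1}QN^{-1}$, hence $\D\geq\Tr(N^{-1}QN^{-1}A)$. Converting this into $c\,\Tr Q$ requires $A\succeq\frac a2 I$ and $N\preceq(\lambda+1)aI$. The paper secures these only when $\max_i|z_i|<\frac{a}{2b}$. In the complementary case, \cref{lem:tech} gives $\D\geq\frac{c_3a^2}{4b^2}\geq\frac{c_3a^2}{16b^2}\A^2$, using $\A\leq2$. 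Your case split (a)/(b) is the right device, but it must be attached to this step. With threshold $\frac{c_3a^2}{4b^2}$ on $\D$, \cref{lem:tech} forces $\max_i|z_i|<\frac{a}{2b}$, which is what the matrix step needs.
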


%\begin{remark}
%    In dimension two, $\kappa_2\approx4.6001\times10^{-3}$.
%\end{remark}

We next turn to our main spectral-gap estimates, which compare the splitting of the first eigenvalue cluster directly with the classical Szeg\H{o}--Weinberger deficit. The next two theorems control, respectively, the gap between the first two nonzero eigenvalues and the full width of the cluster.

\begin{theorem}
    \label{thm:mu2}
    Let $\Omega\subset\mathbb{R}^d$ be a bounded Lipschitz domain. Then
    \begin{equation}\label{eq:mu2-mu1}
        \muu_2(\Omega)-\muu_1(\Omega) \leq C_1 \big(\muu_1(\B)-\muu_1(\Omega)\big), 
    \end{equation}
    where $C_1<\frac{2^{\frac{2}{d}}d}{d-1}$ is the larger root of \cref{eq:C4}.
\end{theorem}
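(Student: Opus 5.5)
We follow the matrix method of \cite{HLT26+}, i.e.\ Hersch's variational principle applied to the $d$-dimensional trial space transplanted from the first eigenspace of the ball. After scaling we may assume $R_\Omega=1$, so $|\Omega|=\omega_d$ and $\muu_i=\mu_i$. Let $g(r)$ be the radial profile of the first Neumann eigenfunctions of $\B$ (so $g(r)x_j/r$, $j=1,\dots,d$, span the first eigenspace), extended by $g(1)$ for $r\ge 1$. By the usual Brouwer-degree argument, after translating $\Omega$ we obtain trial functions $u_j=g(|x|)x_j/|x|$ that are $L^2(\Omega)$-orthogonal to constants. Define the Gram matrix $M=(\int_\Omega u_iu_j)$ and the energy matrix $K=(\int_\Omega\nabla u_i\cdot\nabla u_j)$. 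Hersch's principle (min--max restricted to the span of the $u_j$, which is orthogonal to constants) gives the matrix inequalities of \cite{HLT26+}. In particular, the generalized eigenvalues $\lambda_1\le\dots\le\lambda_d$ of the pencil $(K,M)$ satisfy $\mu_k\le\lambda_k$ for each $k$.

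The key scalar quantities are $\operatorname{tr}M=\int_\Omega g^2$ and
\[
\operatorname{tr}K=\int_\Omega\Big(g'^2+(d-1)\tfrac{g^2}{r^2}\Big)=:\int_\Omega B(r).
\]
Weinberger's monotonicity facts ($g^2$ increasing, $B$ decreasing) together with the rearrangement $\Omega\to\B$ give $\operatorname{tr}K\le\mu_1(\B)\operatorname{tr}M$.

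To bring in $\mu_2$, we use a two-dimensional trial space. Take a subspace $V\subset\operatorname{span}\{u_j\}$ of dimension $2$. Then
\[
\mu_1+\mu_2\le\sum_{k\le2}\lambda_k\le\frac{\operatorname{tr}(P_VK)}{\cdots}.
\]
More concretely, we diagonalize $M$ by a rotation, which preserves the radial structure, so that $M=\operatorname{diag}(m_1,\dots,m_d)$ with $m_1\ge\dots\ge m_d$. Then $\mu_1\le \min_j K_{jj}/m_j$ and $\mu_1+\mu_2\le K_{11}/m_1+K_{22}/m_2$, up to the standard correction when $K$ is not diagonal, handled by the min--max for the pencil. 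The angular part gives
\[
K_{jj}=\int_\Omega\Big(g'^2\tfrac{x_j^2}{r^2}+\tfrac{g^2}{r^2}\big(1-\tfrac{x_j^2}{r^2}\big)\Big),
\]
and summing over $j$ shows that each diagonal entry is a convex combination of $g'^2$ and $g^2/r^2$ with weights coming from the $x_j^2/r^2$.

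The quantitative step is then as follows. We write $\delta=\mu_1(\B)-\mu_1(\Omega)\ge0$, which is the Szeg\H{o}--Weinberger deficit, and use the inequality $\mu_2-\mu_1\le(\lambda_1+\lambda_2)-2\mu_1$. We bound $\lambda_1+\lambda_2$ from above in terms of the traces, using that the two largest $m_j$ carry at least a fraction $2/d$ of $\operatorname{tr}M$. We bound $\mu_1$ from below through
\[
\mu_1=\mu_1(\B)-\delta .
\]
This produces an inequality of the form
\[
\mu_2-\mu_1\le a\,\delta + b\,(\mu_2-\mu_1)^2/\ldots
\]
Equivalently, combining the trace bound with $\mu_1\le\lambda_1$, we obtain a quadratic constraint in the ratio $t=(\mu_2-\mu_1)/\delta$, whose admissible $t$ lie below the larger root $C_1$ of \cref{eq:C4}. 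The factor $2^{2/d}$ should arise from comparing $\Omega$ with a ball of half the volume, that is, from the bound on the mass of $g^2$ carried by two directions. The bound $d/(d-1)$ should arise from the angular coefficient $(d-1)$ in $B$.

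The main obstacle is the off-diagonal coupling. The generalized eigenvalues of $(K,M)$ are not simply the ratios $K_{jj}/m_j$, so we must show that diagonalizing $M$ costs nothing. We would try Ky Fan's principle for the pencil,
\[
\lambda_1+\lambda_2=\min_{\dim V=2}\operatorname{tr}\big(K|_V(M|_V)^{-1}\big),
\]
choosing $V$ to be the span of the top two $M$-eigenvectors. This gives an upper bound with diagonal entries only. The remaining difficulty is the careful algebra that extracts \cref{eq:C4} from this trace bound together with $\mu_1\le\lambda_1$.
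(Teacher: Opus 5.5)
Your proposal has a genuine gap. The step you defer as ``careful algebra'' is the entire proof, and it cannot be carried out from the ingredients you list. The polynomial \cref{eq:C4} contains $\alpha_d=\tau_d\mu_1(\B)^2$ and $2^{2/d}$. These enter through two inputs that never appear in your plan: the linear sum estimate \cref{thm:sumstability}, $\D\ge\tau_d(\s-\mathcal{S}(\B))$, and the Bucur--Henrot bound $\muu_2(\Omega)\le 2^{2/d}\muu_1(\B)$. With them, the paper's argument is short and uses no two-dimensional Ritz values. If $t>C_1$, then $t>\frac{d}{d-1}$, and $\mu_i\ge\mu_2$ for $i\ge2$ gives $\s-\mathcal{S}(\B)\ge((d-1)t-d)\delta>0$, hence $\D\ge\tau_d((d-1)t-d)\delta$. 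On the other side, $\D\le\frac{1}{\mu_1}+\frac{d-1}{\mu_2}-\frac{d}{\mu_1(\B)}$. With $\rho=\mu_1/\mu_1(\B)$, Bucur--Henrot forces $\rho\ge\frac{t-2^{2/d}}{t-1}$. The resulting upper bound is decreasing in $\rho$, and substituting yields exactly the quadratic inequality of \cref{eq:C4} in $t$, contradicting $t>C_1$. So $2^{2/d}$ is not about how much of $\int_\Omega g^2$ two directions carry. Likewise, $\frac{d}{d-1}$ is just the threshold where $(d-1)t-d$ becomes positive; it comes from the $d-1$ eigenvalues $\mu_2,\dots,\mu_d\ge\mu_2$ in the cluster sum, not from the angular coefficient.

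Your route also fails on its own terms. Since $\delta\le\mu_1(\B)$, deducing $\mu_2-\mu_1\le C\delta$ from $\mu_2-\mu_1\le(\lambda_1+\lambda_2)-2\mu_1$ requires an upper bound on $\lambda_1+\lambda_2$ that is uniform over all domains, and none exists. Take the cylinder $[-L,L]\times B^{d-1}_\varepsilon$ of volume $\omega_d$. Symmetry makes your $M$ and $K$ diagonal with no translation needed. For $j\ge2$ one finds $K_{jj}\gtrsim\varepsilon^{d-1}$ (from the angular term, using $g(r)/r\ge g(1)$ on $(0,1)$) and $M_{jj}\lesssim\varepsilon^{d+1}$. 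Hence $\lambda_2\gtrsim\varepsilon^{-2}$, while $\mu_2(\Omega)\to0$. More generally, as $\mu_1(\Omega)\to0$ the theorem reduces to a universal bound $\muu_2(\Omega)\le C_1\muu_1(\B)+o(1)$. The transplanted trial space cannot supply this, so an external bound such as Bucur--Henrot is indispensable. Even when $\Omega$ is close to the ball, the scalar inequality $\operatorname{tr}K\le\mu_1(\B)\operatorname{tr}M$ does not control $K_{11}/m_1+K_{22}/m_2$. The directional information you need is the trace-free matrix $Z$ together with $\D\ge c_3\|Z\|_{\mathrm{HS}}^2$ from \cref{lem:tech}, which is precisely what drives \cref{thm:sumstability}. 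Finally, ``$\mu_1=\mu_1(\B)-\delta$'' is the definition of $\delta$, not a lower bound on $\mu_1$.
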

As a consequence,  for sufficiently small $c>0$, the functional $(1-c)\muu_1+c\muu_2$ is maximized by the ball, with the admissible range of $c$ determined by the constant in the theorem. 

\begin{remark}
    A family of dumbbell-shaped domains chosen as in \cite[Section 5.2]{AC04} shows that the optimal coefficient in \cref{thm:mu2} is at least $2^{\frac{2}{d}}$. In dimension two, a numerical computation for the union of two unit disks whose centers are approximately $1.64$ apart gives the ratio $2.452$, suggesting a stronger lower bound. On the other hand, \cref{prop:2dimmu2upper} improves the upper bound $4$ furnished by \cref{thm:mu2} to $\frac{4\muu_1(\B)+2}{\muu_1(\B)+1}\approx3.544$.
\end{remark}

\begin{theorem}
    \label{thm:mud}
    Let $d\geq3$, and let $\Omega\subset\mathbb{R}^d$ be a bounded Lipschitz domain. Then
    \begin{equation}
        \label{eq:mud-mu1}
        \muu_d(\Omega)-\muu_1(\Omega)\leq C_2\big(\muu_1(\B)-\muu_1(\Omega)\big),
    \end{equation}
    where   $C_2$ is the larger root of \cref{eq:C5}.
\end{theorem}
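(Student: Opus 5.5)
We follow the matrix method of \cite{HLT26+} for \cref{thm:HLT}, now aiming at $\muu_d$ rather than at the reciprocal sum. By scaling we take $|\Omega|=\omega_d$, so $\muu_i=\mu_i$ and $R_\Omega=1$. Let $g(r)$ be the radial profile of the first nonzero Neumann eigenfunction of $\B$, extended as in Weinberger's argument: $g(r)=g(1)$ for $r\geq 1$. By a Brouwer-degree (center of mass) argument, translate $\Omega$ so that the trial functions $u_j(x)=g(|x|)x_j/|x|$, $j=1,\dots,d$, are orthogonal to constants on $\Omega$.

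We then form the $d\times d$ Gram matrices
\[
A_{jk}=\int_\Omega u_ju_k, \qquad B_{jk}=\int_\Omega \nabla u_j\cdot\nabla u_k .
\]
Hersch's principle applied to the $d$-dimensional trial space spanned by the $u_j$ gives two constraints on the eigenvalues of the pencil $(B,A)$. The first is that the $k$-th generalized eigenvalue dominates $\mu_k(\Omega)$. The second comes from combining the trial space with the first $k-1$ true eigenfunctions: for every $k$-dimensional subspace of trial functions orthogonal to the first $k-1$ eigenfunctions, the Rayleigh quotient is at least $\mu_k$. Rotating coordinates, we may diagonalize $A$ up to the orthogonality constraint.

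Next come the two classical Weinberger-type inequalities. The trace bound $\operatorname{tr}B\leq\int_\Omega (g'^2+(d-1)g^2/r^2)$ is maximized by the ball via the monotonicity of $g'^2+(d-1)g^2/r^2$. The bound $\operatorname{tr}A=\int_\Omega g^2$ is minimized by the ball via monotonicity of $g$. Weinberger's proof is in fact quantitative: it gives
\[
\operatorname{tr}B-\mu_1(\B)\operatorname{tr}A\leq -c\,(\text{mass displacement}).
\]
On the other side, $\operatorname{tr}B\geq\sum_j \mu_{(j)}A_{jj}$ after suitably ordering eigenvalues. In \cref{thm:mu2} this was done using $\mu_1$ and $\mu_2$. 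Here we use the two-step eigenvalue bound
\[
\operatorname{tr}B\geq \mu_1\lambda_{\max}(A)+\mu_d\,(\operatorname{tr}A-\lambda_{\max}(A))\cdot\theta ,
\]
obtained by choosing, within the span of $u_1,\dots,u_d$, a $(d-1)$-dimensional subspace orthogonal to the first eigenfunction. To reach $\mu_d$ we need orthogonality to $d-1$ eigenfunctions. So we use the subspace of codimension $d-1$: at least one trial direction is orthogonal to $\phi_1,\dots,\phi_{d-1}$, and its Rayleigh quotient is $\geq\mu_d$. This gives a weighted inequality $\mu_1(\operatorname{tr}A-a)+\mu_d\,a\leq\operatorname{tr}B$, where $a$ is the $A$-mass of that direction.

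The main obstacle is a lower bound on $a$: the worst case puts little mass on the good direction. Here we exploit the rotational freedom. Averaging over rotations $O\in SO(d)$, using the Haar-averaged trial spaces $u\circ O$ as in \cite{HLT26+}, should give $a\geq \lambda_{\min}(A)$. We then need $\lambda_{\min}(A)\gtrsim \operatorname{tr}A/(\text{const})$, which follows from the pointwise bound $g^2\leq g(1)^2$ and the identity $\sum_j u_j^2=g^2$. Combining, we get
\[
(\mu_d-\mu_1)\,a\leq \operatorname{tr}B-\mu_1\operatorname{tr}A\leq(\mu_1(\B)-\mu_1)\operatorname{tr}A .
\]
Normalizing yields a quadratic inequality in the ratio $t=(\mu_d-\mu_1)/(\mu_1(\B)-\mu_1)$, whose admissible values lie below the larger root $C_2$ of \cref{eq:C5}. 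The restriction $d\geq3$ enters exactly in this mass estimate, where the dimension-dependent constants in \cref{eq:C5} are derived and where $d=2$ degenerates into \cref{thm:mu2}.
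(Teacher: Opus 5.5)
There is a genuine gap, at the step you yourself flag as the main obstacle. Your reduction up to that point is sound. Choose a unit vector $v$ with $\sum_k v_k u_k\perp\phi_1,\dots,\phi_{d-1}$, complete it to an orthonormal basis, and use Weinberger's monotonicity $\Tr B\le\mu_1(\B)\Tr A$. This does give
\[
\big(\mu_d(\Omega)-\mu_1(\Omega)\big)\,v^TAv\le\big(\mu_1(\B)-\mu_1(\Omega)\big)\Tr A .
\]
The problem is the uniform bound $v^TAv\ge\lambda_{\min}(A)\gtrsim\Tr A$. The first inequality is trivial and needs no Haar averaging; averaging could not help anyway, since $v$ is dictated by the orthogonality conditions. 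The second inequality is false. The facts $G^2\le G(1)^2$ and $\sum_j u_j^2=G^2$ only give upper bounds such as $\lambda_{\max}(A)\le\Tr A\le G(1)^2\omega_d$.

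Here is a concrete counterexample. Take the box $(-L,L)\times(-\varepsilon,\varepsilon)^{d-1}$ of volume $\omega_d$, centered at the origin; by symmetry it is admissible for Weinberger's centering. For small $\varepsilon$, the eigenfunctions $\phi_1,\dots,\phi_d$ depend only on $x_1$, so $v=e_d$ is admissible because $u_d$ is odd in $x_d$. Since $G/r$ is bounded, $e_d^TAe_d=\int_\Omega G^2\theta_d^2\,\mathrm{d}x\le C\varepsilon^2\omega_d\to0$, while $\Tr A\to G(1)^2\omega_d$. Your inequality then becomes vacuous. A bound of the type $A\ge aI+bZ\ge\frac a2 I$ (notation of \eqref{eq:notation}) holds only when the angular imbalance $Z$ is small, which is exactly what \cref{lem:tech} supplies in the small-deficit regime. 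Far from the ball you would need a separate argument based on a universal upper bound for $\mu_d$. Your proposal contains neither ingredient.

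Even a repaired version would not prove the statement with the stated constant. Your final inequality is linear in $t$, with constant $\Tr A/v^TAv$, not quadratic. Nothing in it involves $\alpha_d=\tau_d\mu_1(\B)^2$ or $L_d=K_d/\mu_1(\B)$, and these are what define \eqref{eq:C5}. Likewise, the restriction $d\ge3$ does not come from a mass estimate; it only reflects the use of Kr\"oger's bound \eqref{eq:Krogerup} (for $d=2$ one has $\mu_d=\mu_2$ and \cref{thm:mu2} applies).

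The paper's proof uses no new trial functions. Normalize $|\Omega|=\omega_d$, set $\rho=\mu_1(\Omega)/\mu_1(\B)$, and suppose $\widetilde\delta>C_2>d$. Then:
\begin{enumerate}
\item[(i)] The ordering $\mu_2,\dots,\mu_{d-1}\ge\mu_1$ gives $\s-\mathcal{S}(\B)\ge(\widetilde\delta-d)\big(\mu_1(\B)-\mu_1(\Omega)\big)$, so \cref{thm:sumstability} bounds $\D$ from below.
\item[(ii)] The same ordering gives the upper bound $\D\le\frac{d-1}{\mu_1(\Omega)}+\frac{1}{\mu_d(\Omega)}-\frac{d}{\mu_1(\B)}$.
\item[(iii)] Kr\"oger's bound $\mu_d\le K_d$ yields $\rho\ge(\widetilde\delta-L_d)/(\widetilde\delta-1)$.
\item[(iv)] The upper bound is decreasing in $\rho$, so comparing it with the lower bound says the polynomial in \eqref{eq:C5} is $\le0$ at $s=\widetilde\delta$. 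This contradicts $\widetilde\delta>C_2$.
\end{enumerate}
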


The exponents in the preceding spectral and geometric estimates cannot be improved.

\begin{theorem}
    \label{thm:sharpness}
    The exponents appearing in the stability estimates \cref{eq:inversesumSWdeficit,eq:inversesumspecgap,eq:sumstability,eq:FraenkelStability,eq:mu2-mu1,eq:mud-mu1}     are sharp.
\end{theorem}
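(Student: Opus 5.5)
The exponents are shown to be sharp with nearly spherical domains $\Omega_\varepsilon=\{x=r\theta:\ 0\le r<1+\varepsilon h(\theta)\}$, where $h$ is a fixed smooth function on $\mathbb{S}^{d-1}$, together with Hadamard-type perturbation formulas for the cluster $\mu_1,\dots,\mu_d$ near the $d$-fold eigenvalue $\mu_1(\B)$. For each estimate we need a family along which the left- and right-hand sides scale with exactly the claimed exponents. Sharpness then means that no larger exponent can appear on the deficit side, or no smaller one on the quantity side.

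First we record the expansions. Choose $h$ as a spherical harmonic of degree $k\ge 2$ with $\int h=0$, normalized so that volume is preserved to first order. Degree $1$ only translates the ball, and degree $0$ is removed by the volume normalization. The cluster eigenvalues are the eigenvalues of $\mu_1(\B)$ plus $\varepsilon M+O(\varepsilon^2)$. Here $M$ is the $d\times d$ matrix of the shape derivative restricted to the eigenspace spanned by $j(r)x_i/r$:
\[
M_{ij}=\int_{\mathbb{S}^{d-1}} h\,\big(|\nabla_\tau u_i\cdot\nabla_\tau u_j|-\mu_1(\B)u_iu_j\big)\,d\sigma .
\]
Up to normalization this is $c_d\int h\,(\delta_{ij}-d\,\theta_i\theta_j)\,d\sigma$, which is nonzero exactly when $h$ has a degree-$2$ component. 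Take $h$ a trace-free quadratic harmonic, for example $h=\theta_1^2-\theta_2^2$. Then $M$ is nonzero and trace-free, so $\muu_i(\Omega_\varepsilon)-\muu_i(\B)\asymp\varepsilon$ for some $i$, and the splitting $\muu_d-\muu_1\asymp\varepsilon$. Since $\operatorname{tr}M=0$ and the ball is a critical point of both $\D$ and $\s$, we get $\D=O(\varepsilon^2)$; $\D\ge 0$ by \cref{thm:HLT}, and it is positive for $\varepsilon\neq0$. Likewise $\A\asymp\varepsilon$, because $|\Omega_\varepsilon\Delta B|\approx\varepsilon\int|h|$ after optimizing the center, and this is attained to first order for even $h$.

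From these expansions:
\begin{itemize}
\item \cref{eq:inversesumSWdeficit}, \cref{eq:inversesumspecgap} and \cref{eq:FraenkelStability} are sharp: both sides are of order $\varepsilon^2$, so no exponent smaller than $2$ on the quantity side (equivalently, larger than $1/2$ in \cref{eq:inversesumspecgap}) can hold.
\item \cref{eq:sumstability} is sharp if we show $|\s-\mathcal S(\B)|\gtrsim\varepsilon^2$ for the same family. This requires a second-order computation; it is the step I expect to be hardest. Using $\frac1{\mu}=\frac1{\mu_B}-\frac{\mu-\mu_B}{\mu_B^2}+\frac{(\mu-\mu_B)^2}{\mu_B^3}+\dots$, we get
\[
\D=-\frac{\s-\mathcal S(\B)}{\muu_1(\B)^2}+\frac{1}{\muu_1(\B)^3}\sum_i(\muu_i-\muu_1(\B))^2+O(\varepsilon^3).
\]
Both $\D$ and the quadratic sum are of order $\varepsilon^2$. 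So $\s-\mathcal S(\B)$ is of order $\varepsilon^2$ unless a cancellation makes $\D$ exactly equal to the second term at leading order. My first attempt is to rule that out by choosing $\varepsilon h$ plus a degree-$4$ or degree-$0$ correction of order $\varepsilon^2$, tuned so that $\s$ shifts by $c\,\varepsilon^2$ with $c\neq0$. An alternative is a second family with $h$ of degree $3$, where $M=0$ and the splitting is $O(\varepsilon^2)$. Then $\D\asymp\varepsilon^2$, with strict positivity from the strictness in \cref{thm:HLT} applied at second order (via Brasco--Pratelli-type quadratic behavior), and the expansion forces $\s-\mathcal S(\B)\asymp\varepsilon^2$. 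In either case both sides are quadratic, so the linear exponent is optimal.
\item \cref{eq:mu2-mu1} and \cref{eq:mud-mu1} are sharp in the sense that the deficit $\muu_1(\B)-\muu_1(\Omega)$ cannot be raised to any power $>1$. For the quadratic family, $\muu_1(\Omega_\varepsilon)=\muu_1(\B)+\varepsilon\lambda_{\min}(M)+O(\varepsilon^2)$ with $\lambda_{\min}(M)<0$, because $M$ is nonzero and trace-free. Thus the Szegő--Weinberger deficit is of order $\varepsilon$, and the gaps $\muu_2-\muu_1$ and $\muu_d-\muu_1$ are of order $\varepsilon$. For $d\ge3$ with $\muu_2-\muu_1$, choose $h$ so that $M$ has a simple lowest eigenvalue, e.g. $h=d\theta_1^2-1$, so that the first gap is also linear.
\end{itemize}

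The remaining technical point is to justify the expansions for Lipschitz perturbations of the ball. Smoothness of $h$ makes $\Omega_\varepsilon$ smooth, and classical analytic perturbation theory for the Neumann Laplacian pulled back to $\B$ gives the expansions with uniform remainders.
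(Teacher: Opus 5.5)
Your degree-$2$ family is exactly the paper's device for \cref{eq:inversesumSWdeficit,eq:inversesumspecgap,eq:mu2-mu1,eq:mud-mu1}. Reusing it for \cref{eq:FraenkelStability} is a legitimate shortcut; the paper uses a profile orthogonal to degrees $\le 2$ there, but only $\mathcal{D}(\Omega_\varepsilon)=O(\varepsilon^2)$ and $\mathcal{A}(\Omega_\varepsilon)\gtrsim\varepsilon$ are needed. The coefficient $d$ in your cluster matrix should be $\lambda+1$, which is harmless since $\int h=0$.

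The gap is in the sharpness of \cref{eq:sumstability}, where neither of your two routes is completed. Your first route (tuning by an $O(\varepsilon^2)$ correction of degree $4$ or $0$) cannot work, because $\mathcal{S}$ is stationary at the ball in every direction. A constant profile is a dilation, which is invisible to the $\muu_i$. For zero-mean profiles the cluster matrix is trace-free, and for degree $\ge 3$ it vanishes altogether. Hence an $O(\varepsilon^2)$ correction of any degree affects $\mathcal{S}(\Omega_\varepsilon)$ only at order $\varepsilon^3$. The $\varepsilon^2$-coefficient of $\mathcal{S}(\Omega_\varepsilon)-\mathcal{S}(\B)$ is therefore a quadratic form in $h$ alone, so there is nothing to tune. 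For the degree-$2$ family you would have to compute that second-order coefficient, and you do not.

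Your second route uses the paper's family, but the decisive input $\mathcal{D}(\Omega_\varepsilon)\gtrsim\varepsilon^2$ is not justified. Since the cluster matrix vanishes, $\muu_i-\lambda=O(\varepsilon^2)$ for all $i$, and so
\[
\mathcal{D}(\Omega_\varepsilon)=-\tfrac{1}{\lambda^2}\big(\mathcal{S}(\Omega_\varepsilon)-\mathcal{S}(\B)\big)+O(\varepsilon^4).
\]
The conclusion therefore needs $\mathcal{D}(\Omega_\varepsilon)\gg\varepsilon^4$. Strictness in \cref{thm:HLT} only gives $\mathcal{D}(\Omega_\varepsilon)>0$. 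A Brasco--Pratelli bound $\lambda-\muu_1\gtrsim\mathcal{A}^2$ controls $\muu_1$ alone, so it cannot exclude $\sum_i(\muu_i-\lambda)=O(\varepsilon^4)$ with $\muu_2,\dots,\muu_d$ compensating.

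The missing idea is to invoke the paper's own geometric estimate \cref{thm:recsumfraenkel}, $\mathcal{D}(\Omega_\varepsilon)\geq\kappa_d\,\mathcal{A}(\Omega_\varepsilon)^2$. Combine it with $\mathcal{A}(\Omega_\varepsilon)\asymp|\varepsilon|$, which holds for any nonzero profile orthogonal to degrees $0$ and $1$ by the standard nearly-spherical asymmetry estimate. Your even-$h$ heuristic does not cover this case, since a degree-$3$ profile is odd. This gives $\mathcal{D}(\Omega_\varepsilon)\asymp\varepsilon^2$. Hence $\mathcal{S}(\B)-\mathcal{S}(\Omega_\varepsilon)=\lambda^2\mathcal{D}(\Omega_\varepsilon)+O(\varepsilon^4)\asymp\mathcal{D}(\Omega_\varepsilon)$, and $\mathcal{D}(\Omega_\varepsilon)/|\mathcal{S}(\Omega_\varepsilon)-\mathcal{S}(\B)|^{\alpha}\to0$ for every $\alpha<1$. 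This is precisely the paper's argument.
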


% We remark that the spectral and geometric estimates above are complementary. A concentric spherical shell with sufficiently small inner radius has positive Fraenkel asymmetry, whereas its first nonzero Neumann eigenvalue has multiplicity at least $d$, so the corresponding spectral splitting vanishes. Conversely, let $\Omega_\varepsilon$ be obtained from the unit ball by removing an $\varepsilon$-neighborhood of an equatorial hyperplane, except for a central passage of radius $\varepsilon$. Each $\Omega_\varepsilon$ is a connected Lipschitz domain, and $\mathcal{A}(\Omega_\varepsilon)\to 0$ as $\varepsilon\to 0^+$. As the central passage collapses, $\Omega_\varepsilon$ converges spectrally for the Neumann Laplacian, to the disjoint union of two half-balls. Consequently, $\mu_1(\Omega_\varepsilon)\to 0$, whereas $\mu_i(\Omega_\varepsilon)$ ($2\leq i\leq d$) converges to the first positive Neumann eigenvalue of a half-ball. Thus, none of the spectral quantities appearing in our stability estimates can be controlled by the Fraenkel asymmetry on the class of general Lipschitz domains.

Finally, in dimension two we derive improved constraints on the joint spectral image of the first two nonzero Neumann eigenvalues. According to  the Bucur--Henrot inequality \cite{BH19} and  \cref{thm:HLT},  the first two nonzero Neumann eigenvalues
 $\muu_1$ and $\muu_2$ satisfy the inequalities 
\[
  \muu_2(\Omega) \le 2\muu_1(\mathbb B), \quad \frac{1}{\muu_1(\Omega)}+\frac{1}{\muu_2(\Omega)} \ge \frac{2}{\muu_1(\mathbb B)} \quad \text{and} \quad \muu_1(\Omega) \le \muu_2(\Omega).
\]
Consequently, the  $(\muu_1,\muu_2)$-image lies in the following region in the first quadrant bounded by the lines $\muu_1=\muu_2$, $\muu_2=2\muu_1(\mathbb B)$, and the hyperbola $\frac{1}{\muu_1}+\frac{1}{\muu_2}= \frac{2}{\muu_1(\mathbb B)}$, as shown in Figure \ref{fig:mu1mu2-image}. The improved inequalities \cref{eq:inversesumSWdeficit} and \cref{eq:sumstability} yield only a slight refinement of this region because 
\[\tau_2\approx9.7649\times10^{-4},\qquad   \gamma_2\approx1.9204\times10^{-4}.\]
To achieve a visible improvement, we employ the full information contained in the two-dimensional matrix  comparison which yields an additional bilinear constraint and thus excludes a further portion of the previously admissible region.

\begin{proposition}
	\label{prop:2dimmu2upper}
	Let $\Omega\subset\mathbb{R}^2$ be a bounded Lipschitz domain. Then
    \[
        \muu_2(\Omega)\leq     
        \begin{cases}
            2\muu_1(\B), & 0<\muu_1(\Omega)\leq \dfrac{2\muu_1(\B)^2}{3\muu_1(\B)+1},\\
            \dfrac{(\muu_1(\B)-1)\muu_1(\Omega)+2\muu_1(\B)} {2\muu_1(\Omega)+1-\muu_1(\B)}, & \dfrac{2\muu_1(\B)^2}{3\muu_1(\B)+1} \leq \muu_1(\Omega)\leq\muu_1(\B).
        \end{cases}
    \]
	Equality holds if and only if $\Omega$ is a disk.
\end{proposition}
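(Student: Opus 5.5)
The plan is to run the two-dimensional version of the He--Li--Tang matrix argument behind \cref{thm:HLT}. This time we keep the full $2\times2$ matrix information rather than only its trace, and then intersect the resulting constraint with the Bucur--Henrot bound $\muu_2(\Omega)\le 2\muu_1(\B)$. After scaling we assume $|\Omega|=|\B|$, so that $\muu_i=\mu_i$.

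\emph{Trial space.} Let $g$ be Weinberger's radial profile: $g(r)=J_1(\sqrt{\mu_1(\B)}\,r)$ for $r\le1$ and $g$ constant for $r\ge1$. Put $u_i(x)=g(|x|)x_i/|x|$ for $i=1,2$. Using the Brouwer/Weinberger centering, translate $\Omega$ so that $\int_\Omega u_i=0$ for $i=1,2$. Form the mass and stiffness matrices
\[
M=\int_\Omega g^2\,\frac{xx^T}{r^2},\qquad K=\int_\Omega \frac{g^2}{r^2}\,I+\int_\Omega\Big(g'^2-\frac{g^2}{r^2}\Big)\frac{xx^T}{r^2}.
\]
By min--max on the two-dimensional space $\mathrm{span}\{u_1,u_2\}\perp 1$, the generalized eigenvalues $\lambda_-\le\lambda_+$ of the pencil $(K,M)$ satisfy $\mu_1\le\lambda_-$ and $\mu_2\le\lambda_+$. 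Choose an orthonormal frame $e,e^\perp$ that diagonalizes $M$ and $K$ simultaneously; in dimension two this reduces to a single rotation angle, and I would handle a non-simultaneously-diagonalizable pencil through the Rayleigh-quotient formulation. Write $m_\pm,k_\pm$ for the diagonal entries, so that $\lambda_\pm=k_\pm/m_\pm$.

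\emph{Scalar constraints.} Weinberger's rearrangement argument gives two inequalities: $k_++k_-=\int_\Omega(g'^2+g^2/r^2)\le \mu_1(\B)\int_{\B}g^2$, since $g'^2+g^2/r^2$ is radially decreasing, and $m_++m_-=\int_\Omega g^2\ge\int_{\B}g^2$, since $g^2$ is radially increasing. These trace inequalities alone only reproduce the hyperbola of \cref{thm:HLT}. The new ingredient is a cross (bilinear) inequality. The $e$-diagonal entry of $K$ contains the angular term $\int (g^2/r^2)(x\cdot e^\perp)^2/r^2$, which involves the same angular weight as $m$ in the orthogonal direction $e^\perp$.

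Using $g^2/r^2\le g^2$ on $\{r\ge1\}$ together with the monotonicity of $g/r$ inside $\B$, I aim to show that each $k_\pm$ is bounded below by the complementary mass $m_\mp$ plus a controlled remainder. This remainder should be expressible, after normalizing by $\int_\B g^2$, through the ball identities $\int_\B(g'^2+g^2/r^2)=\mu_1(\B)\int_\B g^2$. The goal is a linear relation among $k_\pm,m_\pm$ in which the constants $1$ and $\mu_1(\B)$ appear exactly as in the stated formula.

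\emph{Closing the argument.} Substitute $k_\pm=\lambda_\pm m_\pm$ and eliminate $m_\pm$ using the trace constraints and the cross constraint. This is a linear-fractional elimination in two variables, and it should yield the bilinear inequality $(2\lambda_-+1-\mu_1(\B))\lambda_+\le(\mu_1(\B)-1)\lambda_-+2\mu_1(\B)$. Because the right-hand side of the stated bound is decreasing in $\muu_1$ where it is relevant, $\mu_1\le\lambda_-$ gives the second branch. Note that $2\lambda_-+1-\mu_1(\B)>0$ holds on the relevant range, since $\mu_1(\B)\approx3.39$ and $\lambda_-\ge \tfrac{2\mu_1(\B)^2}{3\mu_1(\B)+1}$ there. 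Setting this curve equal to $2\mu_1(\B)$ gives exactly $\muu_1=\tfrac{2\mu_1(\B)^2}{3\mu_1(\B)+1}$, which is the switch point; below it the Bucur--Henrot bound is stronger.

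For the equality case, equality in the second branch forces equality in Weinberger's rearrangement inequalities, hence $\Omega$ is a disk. At the corner point the two branches agree, so equality there likewise reduces to the disk case via \cref{thm:HLT}.

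I expect the main obstacle to be deriving the cross constraint with constants sharp enough to produce exactly $\mu_1(\B)-1$ and $1-\mu_1(\B)$. This requires a careful pointwise comparison between $g^2/r^2$ and $g^2$ (and $g'^2$) both inside and outside $\B$. If a direct pointwise bound falls short, I would first try splitting $\Omega$ into $\Omega\cap\B$ and $\Omega\setminus\B$ and using the mass-transport inequality $|\Omega\setminus\B|=|\B\setminus\Omega|$ on each piece.
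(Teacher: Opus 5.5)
\textbf{Genuine gap: the bilinear inequality is never derived.}

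Your closing step is fine. With $\lambda=\mu_1(\B)$, the map $h(s)=\frac{(\lambda-1)s+2\lambda}{2s+1-\lambda}$ is strictly decreasing for $s>\frac{\lambda-1}{2}$, since the numerator of $h'$ is $-(\lambda+1)^2$. So a bilinear inequality $\lambda_+\le h(\lambda_-)$ for the Ritz values, together with $\mu_1\le\lambda_-$ and $\mu_2\le\lambda_+$, would give the second branch. The problem is that you only assert this inequality (``it should yield''), and the framework you set up to derive it does not exist in general.

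Write $A,B$ for your mass and stiffness matrices $M,K$. They need not commute, so no orthonormal frame diagonalizes both. The generalized eigenvalues are then not ratios of diagonal entries, and the elimination in $k_\pm,m_\pm$ has nothing to act on. Deferring this to ``the Rayleigh-quotient formulation'' defers the actual work: an upper bound for $\lambda_+=\max_v\frac{v^TBv}{v^TAv}$ needs control in every direction, i.e.\ operator inequalities, not inequalities between entries in one frame.

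Your cross constraint also points the wrong way: a lower bound on stiffness cannot bound $\mu_2$ from above. The true and useful statement is the upper bound $e^TBe\le(\lambda-1)a+(e^\perp)^TAe^\perp$, valid in every orthonormal frame, where $a=\frac12\int_{\B}g^2$. It follows at once from the comparisons $A\ge aI+bZ$ and $B\le\lambda aI-bZ$ of the He--Li--Tang argument together with $\Tr Z=0$, where $b=g(1)^2$ and $Z=\int_\Omega\theta\theta^T-\int_{\B}\theta\theta^T$. So the obstacle you anticipate is not where the difficulty lies. Similarly, the two trace inequalities alone give only Szeg\H{o}--Weinberger, not the hyperbola of \cref{thm:HLT}; the hyperbola already needs this $Z$-coupling.

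\textbf{The missing idea.} Compare $(B,A)$ with the commuting one-parameter pair
\[
M=aI+bZ=a\,\diag(1+t,1-t),\qquad N=\lambda aI-bZ=a\,\diag(\lambda-t,\lambda+t),
\]
where $Z=\diag(z,-z)$ with $z\ge0$ after a rotation, and $t=bz/a\in[0,\lambda)$. Then extract two scalar facts that need no commutation.
\begin{enumerate}
\item Hersch's principle and operator monotonicity of inversion give
\[
\frac{1}{\mu_1}+\frac{1}{\mu_2}\ge\Tr(B^{-1}A)\ge\Tr(N^{-1}M)=\frac{2}{\lambda}+F(t),\qquad F(t)=\frac{2(\lambda+1)t^2}{\lambda(\lambda^2-t^2)},
\]
with $F$ increasing.
\item Poincar\'e's principle for the top Ritz value gives, for $t<1$, $\mu_2\le\lambda_+\le\max_v\frac{v^TNv}{v^TMv}=\frac{\lambda+t}{1-t}$, i.e.\ $t\ge\frac{\mu_2-\lambda}{\mu_2+1}$. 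This is trivial when $t\ge1$.
\end{enumerate}
The single parameter $t$ couples the two bounds. For $\mu_2\ge\lambda$,
\[
\frac{1}{\mu_1}+\frac{1}{\mu_2}-\frac{2}{\lambda}\ge F\Big(\frac{\mu_2-\lambda}{\mu_2+1}\Big)=\frac{2(\mu_2-\lambda)^2}{\lambda\mu_2\big((\lambda-1)\mu_2+2\lambda\big)},
\]
which rearranges exactly to $2\mu_1\mu_2\le(\lambda-1)(\mu_1+\mu_2)+2\lambda$. For $\mu_2\le\lambda$ the same inequality is an affine check in $\mu_1\in(0,\mu_2]$. Running this computation with $\lambda_\pm$ in place of $\mu_{1,2}$ yields your Ritz-value inequality, after which your monotonicity step also finishes. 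Applying Hersch directly to $\mu_1,\mu_2$ makes that detour unnecessary.

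\textbf{Equality case.} No domain attains the first branch, since Bucur--Henrot equality requires two disjoint equal disks. The disk sits at $\muu_1=\lambda$, where $h(\lambda)=\lambda$. Equality in the second branch forces $A=M$, and taking the trace gives $|\B\setminus\Omega|=0$. Your corner-point remark is moot, since no disk lies there.
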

\vspace{-.3cm}

\begin{figure}[h]
	\centering
	\includegraphics[width=0.71\textwidth]{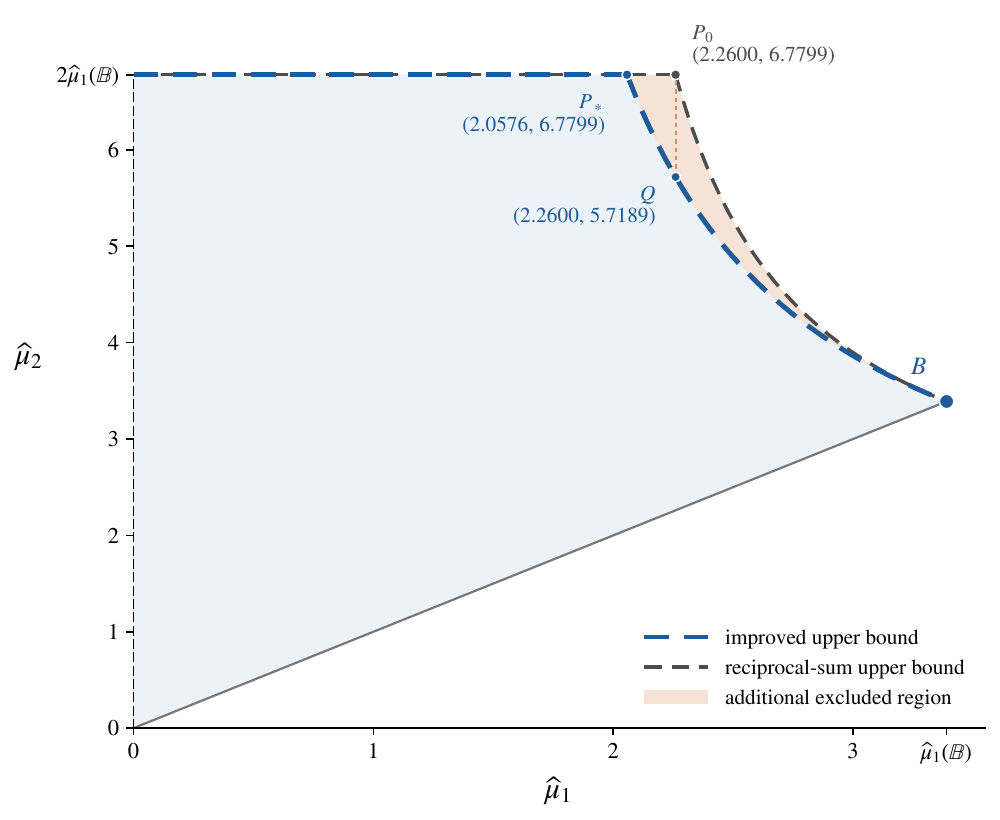}
	\caption{Constraints on the $(\muu_1,\muu_2)$-image. The gray dashed curve is obtained from the reciprocal-sum inequality together with the Bucur--Henrot bound. % whereas the blue dashed curve represents the improved constraint in \cref{prop:2dimmu2upper}. %T
		The orange region is the additional region excluded by \cref{prop:2dimmu2upper}.%, and the light-blue region is the part not excluded by the resulting constraints, bounded below by $y=x$.
	}
	\label{fig:mu1mu2-image}
\end{figure}

\subsection{Proof strategy}
The common starting point is Weinberger's family of $d$ trial functions transplanted from the first nonzero Neumann eigenspace of the ball. Let $A$ and $B$ be the associated mass and energy matrices, defined in \cref{eq:defAB}. Following He, Li, and Tang \cite{HLT26+}, we keep the trial functions coupled and apply Hersch's variational principle to the resulting finite-dimensional trial space, while Poincaré's variational principle compares the Neumann eigenvalues with the Ritz values. The comparison of the two matrices is governed by a symmetric trace-free matrix measuring the angular imbalance of the domain. Our key quantitative matrix lemma shows that the reciprocal-sum deficit controls this imbalance. Combined with the variational comparison, this yields the quadratic estimates for the individual eigenvalue displacements and the spectral splitting. The trace-free structure also produces a cancellation upon summation, leading to the linear estimate for the displacement of the eigenvalue sum.

For the geometric stability estimate, we combine the quantitative matrix lemma with the positive-semidefinite remainder retained in the energy comparison. When the angular imbalance is large, the matrix lemma already gives the required bound. When it is small, the remainder controls a weighted measure of $\B\backslash\Omega$, which is bounded below by the square of the Fraenkel asymmetry through a mass-transfer argument.

The sharpness arguments are based on first-order volume-preserving perturbations of the ball and analytic perturbation theory for its multiple first nonzero Neumann eigenvalue. A suitable degree-$2$ deformation produces a linear splitting of the eigenvalue cluster, whereas the reciprocal-sum deficit remains of quadratic order. For the remaining estimates, we choose a deformation orthogonal to all spherical harmonics of degrees at most $2$, so that the entire cluster is stationary to first order. The asymmetry estimate for nearly spherical domains in \cite[Lemma 6.2]{BDR12}, combined with our geometric stability estimate, then shows that the reciprocal-sum deficit is of quadratic order. Expanding the reciprocal sum further shows that the displacement of the eigenvalue sum has the same order.

\subsection{Organization of the paper}

The remainder of the paper is organized as follows. In Section~2, we recall the variational setup and establish the quantitative matrix lemma underlying all the stability estimates. In Section~3, we prove the spectral stability results \cref{thm:recsummui,cor:recsumspecgap,thm:sumstability}. Section~4 is devoted to the geometric stability estimate \cref{thm:recsumfraenkel}. In Section~5, we prove the spectral-gap estimates \cref{thm:mu2,thm:mud} in terms of the Szeg\H{o}--Weinberger deficit. In Section~6, we establish the sharpness of the exponents by constructing suitable first-order volume-preserving perturbations of the ball. Finally, in Section~7, we specialize the matrix argument to dimension two and derive the improved constraints on the $(\muu_1,\muu_2)$-image stated in \cref{prop:2dimmu2upper}.

\section{Preliminaries}

Let $\B\subset\mathbb{R}^{d}$ be the unit ball. The eigenspace corresponding to $\mu_1(\B)=\dots=\mu_d(\B)$ is spanned by
\[
    r^{1-\nu}J_\nu(p_{\nu,1}r)\frac{x_i}{r} =:g(r)\frac{x_i}{r}, \qquad i=1,\dots,d,
\]
where $\nu=\frac{d}{2}$, $r=|x|$, and $p_{\nu,1}$ is the first positive zero of the derivative of the function $r\mapsto r^{1-\nu}J_\nu(r)$. Let $\Omega\subset\mathbb{R}^d$ be a bounded Lipschitz domain with volume $\omega_d$. Then $\muu_i(\Omega)=\mu_i(\Omega)$. Define
\[
    G(r)=g(r)1_{[0,1]}+g(1)1_{(1,\infty)}, \quad u_i=G(r)\frac{x_i}{r}.
\]
Using the Brouwer fixed-point theorem, Weinberger \cite{Wein56} proved that, after a suitable translation, one has $\int_\Omega u_i\,\dd x=0$ for $i=1,\dots,d$. Set
\begin{equation}
    \label{eq:defAB}
    A_{ij}= \int_\Omega u_i u_j \dd x, \quad B_{ij}= \int_\Omega \nabla u_i\cdot\nabla u_j \dd x.
\end{equation}
Both $A=(A_{ij})$ and $B=(B_{ij})$ are symmetric and positive definite. Since $A^{-1}B$ is similar to $A^{-\frac12}BA^{-\frac12}$, all its eigenvalues are positive. We denote them by
\[
    p_1\leq\dots\leq p_d.
\]
By Poincaré's variational principle \cite[pp. 259-261]{Poi90}\cite[Chapter II]{PS54}, the Ritz values $p_i$ satisfy
\begin{equation}
    \label{eq:mup}
    \mu_i(\Omega)\leq p_i,
    \qquad i=1,\ldots,d.
\end{equation}
Consequently,
\begin{equation}
    \label{eq:recsump}
    \sum_{i=1}^d \frac{1}{\mu_i(\Omega)}\geq \sum_{i=1}^d \frac{1}{p_i} = \Tr (AB^{-1}) = \Tr (B^{-1}A).
\end{equation}
This inequality is also known as Hersch's variational principle \cite{Her61,HX93}. We remark that both  \cref{eq:recsump} and \cref{eq:mup} play    important roles  in our stability argument.

Let $\theta=\frac{x}{r}$. Then
\begin{equation}
    \label{eq:Aexp}
    A = \int_{\Omega} G^2\theta\theta^T \dd x = \frac{\int_{\B}G^2 \dd x}{d}I + \left(\int_{\Omega\backslash \B}-\int_{\B\backslash\Omega}\right) G^2\theta\theta^T \dd x
\end{equation}
and 
\begin{equation}
    \label{eq:Bexp}
\begin{aligned}
    B & = \int_\Omega [G'^2\theta\theta^T + \frac{G^2}{r^2}(I-\theta\theta^T)] \dd x\\
    & = \mu_1(\B)\frac{\int_{\B}G^2 \dd x}{d}I + \left(\int_{\Omega\backslash \B}-\int_{\B\backslash\Omega}\right)[G'^2\theta\theta^T + \frac{G^2}{r^2}(I-\theta\theta^T)] \dd x.
\end{aligned}
\end{equation}
Set
\begin{equation}
    \label{eq:defZ}
    Z = \int_\Omega \theta\theta^T \dd x - \int_{\B} \theta\theta^T \dd x = \left(\int_{\Omega\backslash\B}-\int_{\B\backslash\Omega}\right) \theta\theta^T \dd x.
\end{equation}
The matrix $Z$ is symmetric and has trace $\Tr Z = |\Omega|-|\B| = 0$.

For the reader’s convenience, we first sketch the proof of \cref{thm:HLT} given in {\cite{HLT26+}}, which relies on the following linear-algebraic lemma.
\begin{lemma}[{\cite[Lemma 2.2]{HLT26+}}]
    \label{lem:linalg}
    Let $\widehat{Z}$ be a real symmetric matrix of order $d$ with trace $0$. Suppose that the real positive-definite matrices $A$ and $B$ satisfy
    \[
        A\geq  aI+ \widehat{Z},\quad B\leq \lambda aI- \widehat{Z},
    \]
    where $a>0$  and $\lambda>0$. Then
    \begin{equation*}
        \Tr(B^{-1}A) \geq \frac{d}{\lambda},
    \end{equation*}
   with equality if and only if $\widehat{Z}=0$, $A=aI$, and $B=\lambda aI$.
\end{lemma}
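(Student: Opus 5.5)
We prove \cref{lem:linalg} by turning the two matrix bounds into a single matrix $M = aI+\widehat{Z}$ and then applying a convexity inequality for the trace of $M^{-1}$.

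\emph{Step 1: monotonicity.} Since $B\le \lambda aI-\widehat{Z}$ and $B>0$, the matrix $N:=\lambda aI-\widehat{Z}$ is positive definite. Inversion is operator monotone decreasing, so $B^{-1}\ge N^{-1}$. Because $A>0$ as well,
\[
\Tr(B^{-1}A)=\Tr(A^{1/2}B^{-1}A^{1/2})\ge \Tr(A^{1/2}N^{-1}A^{1/2})=\Tr(N^{-1}A).
\]
Next, $N^{-1}>0$ and $A-(aI+\widehat{Z})\ge0$ give $\Tr\bigl(N^{-1}(A-aI-\widehat{Z})\bigr)\ge 0$. Hence
\[
\Tr(B^{-1}A)\ge \Tr\bigl(N^{-1}(aI+\widehat{Z})\bigr).
\]

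\emph{Step 2: diagonalize.} The matrices $N$ and $aI+\widehat{Z}$ commute, since both are functions of $\widehat{Z}$. Let $z_1,\dots,z_d$ be the eigenvalues of $\widehat{Z}$, so $\sum_i z_i=0$ and $\lambda a-z_i>0$ for each $i$. The right-hand side of Step 1 equals
\[
\sum_{i=1}^d \frac{a+z_i}{\lambda a-z_i}.
\]
Here $a+z_i$ may be negative; we treat that below.

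\emph{Step 3: convexity.} The function $f(z)=\frac{a+z}{\lambda a-z}=-1+\frac{(1+\lambda)a}{\lambda a-z}$ is strictly convex on $(-\infty,\lambda a)$. Jensen's inequality together with $\sum_i z_i=0$ gives
\[
\sum_i f(z_i)\ge d\,f(0)=\frac{d}{\lambda}.
\]
Equality holds only when all $z_i$ are equal, that is, when all $z_i=0$. Negative values of $a+z_i$ cause no trouble, because $f$ is convex on the entire admissible range.

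\emph{Step 4: equality case.} Equality forces $\widehat{Z}=0$, so $N=\lambda aI$. It then forces $\Tr\bigl(N^{-1}(A-aI)\bigr)=0$ with $A-aI\ge0$, which gives $A=aI$. Finally, equality in Step 1 requires $\Tr\bigl(A^{1/2}(B^{-1}-N^{-1})A^{1/2}\bigr)=0$ with $B^{-1}-N^{-1}\ge0$ and $A$ invertible. This gives $B^{-1}=N^{-1}$, so $B=\lambda aI$. The converse is immediate.

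The only delicate points are applying operator monotonicity of inversion to the pair $B\le N$, and noting that convexity of $f$ does not depend on the sign of $a+z_i$.
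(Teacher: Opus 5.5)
Your proof is correct and follows the same route as the argument the paper gives in the proof of \cref{lem:linalg2} (the lemma itself is only cited). That route reduces the problem via $B^{-1}\geq N^{-1}$ and $A\geq M$ to $\Tr(N^{-1}M)=\sum_i \frac{a+\hat{z}_i}{\lambda a-\hat{z}_i}$ and then bounds this scalar sum; the only difference is that where you use strict convexity and Jensen, the paper uses $\sum_i\hat{z}_i=0$ to rewrite the sum exactly as $\frac{d}{\lambda}+\frac{\lambda+1}{\lambda^2 a}\sum_i\frac{\hat{z}_i^2}{\lambda a-\hat{z}_i}$, which gives the same equality case plus the quantitative remainder used later.
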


\begin{proof}[Sketch of proof of \cref{thm:HLT}]
An analysis of the relevant Bessel functions shows that $G$ is nondecreasing on $(0,\infty)$ and strictly increasing on $(0,1)$. Hence, by the positive semidefiniteness of $\theta\theta^T$,
\begin{equation}
    \label{eq:AM}
    A\geq \frac{\int_{\B}G^2 \dd x}{d}I + G(1)^2Z.
\end{equation}
Another computation shows that $G'$ is strictly decreasing on $(0,1)$ and, on $[1,\infty)$, is identically $0$, while $\frac{G}{r}$ is strictly decreasing on $(0,\infty)$ and, on $[1,\infty)$, equals $\frac{G(1)}{r}$. Using the positive semidefiniteness of $\theta\theta^T$ and $I-\theta\theta^T$, one obtains
\begin{equation}
    \label{eq:BN}
    B\leq \mu_1(\B)\frac{\int_{\B}G^2 \dd x}{d}I - G(1)^2Z.
\end{equation}
The reciprocal-sum isoperimetric inequality now follows from \cref{eq:recsump,eq:AM,eq:BN,lem:linalg}. If equality holds, the equality characterization gives $A=\frac{\int_{\B}G(r)^2\dd x}{d}I$, namely,
\[
    \int_{\Omega} G(r)^2\theta\theta^T \dd x = \int_{\B}G(r)^2\theta\theta^T \dd x.
\]
Taking traces on both sides yields
\[
    \int_{\Omega}G(r)^2\dd x = \int_{\B}G(r)^2\dd x.
\]
Since $|\Omega|=|\B|$  and   $G$ is nondecreasing on $(0,\infty)$ and strictly increasing on $(0,1)$, the preceding identity implies that $\Omega = \B$. This completes the proof of the theorem.
\end{proof}

Set
\begin{equation}
    \label{eq:notation}
    a=\frac{1}{d}\int_{\B}G^2\,\dd x,\quad b=G(1)^2, \quad \lambda = \mu_1(\B), \quad M=aI+bZ, \quad N=\lambda aI-bZ,
\end{equation}
where $Z$ is as in \cref{eq:defZ}. After rotating the coordinate system, we may assume that $Z$ is diagonal, say
\begin{equation}
    \label{eq:diagassumption}
    Z=\diag(z_1,\dots,z_d), \qquad \sum_{i=1}^d z_i=0.
\end{equation}
Throughout the remainder of the paper, we use the notation and assumptions in \cref{eq:notation,eq:diagassumption}.

The linear-algebraic lemma above admits the following quantitative refinement.
\begin{lemma}
    \label{lem:linalg2}
    Under the assumptions of \cref{lem:linalg}, suppose in addition that
    $\widehat{Z}\geq-c_0I$ for some $c_0>0$. Then
    \begin{equation*}
        \Tr(B^{-1}A) - \frac{d}{\lambda} \geq \frac{\lambda+1}{\lambda^2(\lambda a+c_0)a} \|\widehat{Z}\|_{\text{HS}}^2.
    \end{equation*}
\end{lemma}

\begin{proof}
    Set $M=aI+\widehat{Z}$, $N=\lambda aI-\widehat{Z}$. Since $0< B\leq N$, we have $B^{-1}\geq N^{-1}>0$. Therefore,
    \begin{equation}
        \label{eq:linalgBinvA}
        \Tr(B^{-1}A) = \Tr(B^{-\frac{1}{2}}AB^{-\frac{1}{2}})\geq \Tr(N^{-1}A)\geq \Tr(N^{-1}M).
    \end{equation}
    After an orthogonal change of basis, we may assume that $\widehat{Z}$ is diagonal, say $\diag(\hat{z}_1,\dots,\hat{z}_d)$. Then $\lambda a-\hat{z}_i>0$ for every $i$, and
    \begin{equation}
        \label{eq:NinvMequiv}
        \Tr(N^{-1}M) = \sum_{i=1}^d \frac{a+\hat{z}_i}{\lambda a-\hat{z}_i} = \frac{d}{\lambda}+ \frac{\lambda+1}{\lambda}\sum_{i=1}^{d}\frac{\hat{z}_i}{\lambda a-\hat{z}_i}.
    \end{equation}
    Since $\Tr \widehat{Z}=0$, we have
    \[
        \sum_{i=1}^d \frac{\hat{z}_i}{\lambda a-\hat{z}_i} = \frac{1}{\lambda a} \sum_{i=1}^d \frac{\hat{z}_i^2}{\lambda a-\hat{z}_i}.
    \]
    Hence \cref{eq:NinvMequiv} becomes
    \begin{equation}
        \label{eq:NinvMequiv2}
        \Tr(N^{-1}M) = \frac{d}{\lambda}+ \frac{\lambda+1}{\lambda^2 a}\sum_{i=1}^{d}\frac{\hat{z}_i^2}{\lambda a-\hat{z}_i}.
    \end{equation}
    Since $\widehat{Z}\geq-c_0I$, we have $\lambda a-\hat{z}_i\leq\lambda a+c_0$. Combining this inequality with \cref{eq:linalgBinvA,eq:NinvMequiv2} proves the result.
\end{proof}

\begin{remark}
    This lemma can also be proved using Taylor's theorem with the Lagrange form of the remainder.
\end{remark}

The following estimate is the key quantitative ingredient in our stability arguments.

\begin{lemma}
    \label{lem:tech}
    The matrix $Z$ defined in \cref{eq:defZ} satisfies
    \begin{equation}
        \label{eq:zilowerbd}
        -\frac{\omega_d}{d}I\leq Z< \frac{\lambda a}{b}I
        % -\frac{\omega_d}{d} \leq z_i < \frac{\lambda a}{b}, \qquad i=1,\dots,d.
    \end{equation}
    Furthermore, 
    \begin{equation}
        \label{eq:Dlower}
        \D \geq c_3 \sum_{i=1}^d z_i^2 \geq c_3(\max_{1\leq i\leq d}|z_i|)^2, 
    \end{equation}
    where
    \begin{equation}
        \label{eq:C1}
        c_3=\frac{b^2(\lambda+1)}{\lambda^2 a\left(\lambda a+\frac{\omega_d b}{d}\right)}.
    \end{equation}
\end{lemma}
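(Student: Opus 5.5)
The plan is to read off both bounds in \eqref{eq:zilowerbd} from the definition \eqref{eq:defZ}, and then obtain \eqref{eq:Dlower} by applying \cref{lem:linalg2} with $\widehat Z=bZ$ and $c_0=b\omega_d/d$.

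For the lower bound, use the second expression in \eqref{eq:defZ} and drop the nonnegative contribution of $\Omega\setminus\B$:
\[
Z\geq -\int_{\B\setminus\Omega}\theta\theta^T\,\dd x\geq -\int_{\B}\theta\theta^T\,\dd x=-\frac{\omega_d}{d}I.
\]
The last identity holds by symmetry, since $\int_\B\theta\theta^T\,\dd x$ is a multiple of $I$ whose trace equals $|\B|=\omega_d$.

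For the upper bound, use \eqref{eq:BN}, which says $B\leq \lambda aI-bZ=N$. The matrix $B$ is positive definite: the $u_i$ are linearly independent and not constant on $\Omega$. Hence $N\geq B>0$, so $\lambda a-bz_i>0$ for every $i$, which is exactly $Z<\frac{\lambda a}{b}I$. (Alternatively, one can argue with $Z\le\int_{\Omega\setminus\B}\theta\theta^T\,\dd x$, but the route through $B$ gives the strict inequality directly.)

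For \eqref{eq:Dlower}, normalize $|\Omega|=\omega_d$, so that $\muu_i=\mu_i$. By \eqref{eq:AM} and \eqref{eq:BN},
\[
A\geq aI+\widehat Z,\qquad B\leq\lambda aI-\widehat Z,\qquad \widehat Z=bZ,
\]
and $\widehat Z$ is symmetric with $\Tr\widehat Z=0$. By the lower bound just proved, $\widehat Z\geq -\frac{b\omega_d}{d}I$, so \cref{lem:linalg2} applies with $c_0=\frac{b\omega_d}{d}$. Combining it with Hersch's principle \eqref{eq:recsump} and $\lambda=\mu_1(\B)$ gives
\[
\D\geq \Tr(B^{-1}A)-\frac{d}{\lambda}\geq \frac{\lambda+1}{\lambda^2 a\,(\lambda a+b\omega_d/d)}\,b^2\sum_{i=1}^d z_i^2 = c_3\sum_{i=1}^d z_i^2 .
\]
The final inequality $\sum_i z_i^2\geq(\max_i|z_i|)^2$ is trivial.

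I expect no real obstacle. The only care needed is the strict upper bound, and in particular confirming that \eqref{eq:BN} together with positive definiteness of $B$ forces strict positivity of $N$. Both facts are stated in the preliminaries, so this is a short check rather than a difficulty.
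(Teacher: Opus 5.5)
Your proposal is correct and follows the paper's proof essentially step for step. The lower bound comes from positive semidefiniteness of $\theta\theta^T$, the strict upper bound from \eqref{eq:BN} together with $B>0$, and \eqref{eq:Dlower} from \cref{lem:linalg2} with $\widehat Z=bZ$ and $c_0=\omega_d b/d$ combined with \eqref{eq:recsump}, giving exactly the constant $c_3$.
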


\begin{proof}
    The upper bound for $Z$ follows from \cref{eq:BN} and positive definiteness of $B$. On the other hand, the definition of $Z$ and the positive semidefiniteness of $\theta\theta^T$ give
    \begin{equation*}
        Z\geq -\int_{\B}\theta\theta^T\,\dd x = -\frac{\omega_d}{d}I.
    \end{equation*}
    This proves \cref{eq:zilowerbd}. 
    
    Apply \cref{lem:linalg2} with $\widehat{Z}=bZ$, $c_0=\frac{\omega_db}{d}$. By \cref{eq:recsump} and the definition of $\D$,
    \[
        \D \geq \Tr(B^{-1}A)-\frac{d}{\lambda} \geq c_3\|Z\|_{\mathrm{HS}}^2 = c_3\sum_{i=1}^d z_i^2.
    \] 
The inequality    \cref{eq:Dlower} follows.
\end{proof}

\section{Stability quantified by the eigenvalues}

In this section, we prove the spectral stability estimates stated in \cref{thm:recsummui,cor:recsumspecgap,thm:sumstability}. More precisely, the reciprocal-sum deficit controls the individual eigenvalue displacements and the splitting of the first eigenvalue cluster quadratically, and the displacement of the sum of the eigenvalues in the cluster linearly. We adopt the notation in \cref{eq:notation} and the assumption \cref{eq:diagassumption}.

\subsection{Individual eigenvalues and spectral splitting}

We begin with the proof of \cref{thm:recsummui}. By scaling invariance, we may assume that $|\Omega|=\omega_d$. Set
\begin{equation}
    \label{eq:epsilon1}
    \varepsilon_0 := \min\left\{ 1,\, \frac{a^2c_3}{4b^2} \right\}.
\end{equation}
We first consider the case $\D\leq \varepsilon_0$. It follows from \cref{eq:Dlower} and the choice of $\varepsilon_0$ that
\begin{equation}
    \label{eq:ziabbound}
    \max_{i}|z_i| \leq \frac{a}{2b}.
\end{equation}
Since $p_d$ is the largest eigenvalue of $A^{-1}B$, the inequalities $A\geq M$ and $B\leq N$ yield
\begin{equation}
    \label{eq:pdvar}
    p_d = \max_{|v|=1} \frac{v^T Bv}{v^T Av} \leq \max_{|v|=1} \frac{v^T N v}{v^T M v} = \max_{|v|=1}\frac{\lambda a-b\zeta}{a+b\zeta},
\end{equation}
where $\zeta=v^TZv$. For every unit vector $v$, we have
\begin{equation}
    \label{eq:abss}
    |\zeta|\leq \max_{i}|z_i| \leq \frac{a}{2b}.
\end{equation}
Consequently, $a+b\zeta\geq a/2$. Combining this with \cref{eq:Dlower,eq:pdvar,eq:abss}, we obtain
\[
    p_d-\lambda \leq \max_{|v|=1}\frac{-b(\lambda+1)\zeta}{a+b\zeta} \leq C_4 \D^{\frac{1}{2}},
\]
where
\[
    C_4= 2\lambda(\lambda+1)^{\frac{1}{2}}\left(\lambda +\frac{\omega_d b}{da}\right)^{\frac{1}{2}} .%  \leq 2\lambda\left(\lambda +\frac{\omega_d b}{da}\right).
\]
Thanks to \cref{eq:mup}, it follows that
\begin{equation}
    \label{eq:mudupper}
    \mu_d(\Omega) \leq \lambda+C_4\D^{\frac{1}{2}}.
\end{equation}

We next derive a lower bound for $\mu_1(\Omega)$. For $i=2,\dots,d$, we have
\[
    \mu_i(\Omega)\leq \mu_d(\Omega) \leq \lambda+C_4\D^{\frac{1}{2}},
\]
and hence
\[
    \frac{d}{\lambda}+\D = \sum_{i=1}^d\frac{1}{\mu_i(\Omega)} \geq \frac{1}{\mu_1(\Omega)} + \frac{d-1}{\lambda+C_4\D^{\frac{1}{2}}}.
\]
Therefore,
\[
\begin{aligned}
    \frac{1}{\mu_1(\Omega)}   \leq \frac{d}{\lambda}+\D - \frac{d-1}{\lambda+C_4\D^{\frac{1}{2}}}        = \frac{1}{\lambda} + \D + \frac{(d-1)C_4\D^{\frac{1}{2}}} {\lambda\big(\lambda+C_4\D^{\frac{1}{2}}\big)}.
\end{aligned}
\]
By the choice of $\varepsilon_0$, we have $\D\leq 1$, and thus
\[
    \frac{1}{\mu_1(\Omega)} \leq \frac{1}{\lambda}+C_5\D^{\frac{1}{2}},
\]
where
\[
    C_5 = 1+\frac{(d-1)C_4}{\lambda^2}.
\]
This yields
\begin{equation}
    \label{eq:mu1lower}
    \mu_1(\Omega) \geq \frac{\lambda}{1+\lambda C_5\D^{\frac{1}{2}}} \geq \lambda-\lambda^2 C_5\D^{\frac{1}{2}}.
\end{equation}
Since $\lambda^2C_5\geq C_4$, combining \cref{eq:mudupper,eq:mu1lower} gives
\begin{equation}
    \label{eq:Deltasmall}
    |\mu_i(\Omega)-\mu_i(\B)|\leq \lambda^2 C_5 \D^{\frac{1}{2}},\qquad i=1,\dots,d.
\end{equation}

It remains to consider the case $\D>\varepsilon_0$. For $d\geq3$, Kr\"oger proved the universal upper bound below in \cite{Kro92}, whereas the two-dimensional sharp estimate is due to Bucur and Henrot \cite{BH19}:
\begin{equation}
    \label{eq:Krogerup}
    \mu_d(\Omega) \leq K_d :=
    \begin{cases}
        4\pi^2 \left( \frac{d(d+2)}{2\omega_d^2} \right)^{\frac{2}{d}}, &d\geq 3,\\
        2\mu_1(\B), & d=2.
    \end{cases}
\end{equation}
Consequently,
\begin{equation}
    \label{eq:Deltanotsmall}
    |\mu_i(\Omega)-\mu_i(\B)| \leq K_d+\mu_1(\B)\leq \varepsilon_0^{-\frac{1}{2}}(K_d+\mu_1(\B))\D^{\frac{1}{2}}.
\end{equation}
Combining \cref{eq:Deltasmall,eq:Deltanotsmall} and setting
\begin{equation}
    \label{eq:gammad}
    \gamma_d = \min\left\{\frac{1}{\mu_1(\B)^4 C_5^2}, \frac{\varepsilon_0}{(K_d+\mu_1(\B))^2} \right\},
\end{equation}
% \[
%     \gamma_d = \min\left\{\frac{1}{(2d-1)^2\lambda^2 \left(\lambda +\frac{\omega_d b}{da}\right)^2}, \frac{1}{(K_d+\mu_1(\B))^2}, \frac{ab^2(\lambda+1)}{(K_d+\mu_1(\B))^2\lambda^2 \left(\lambda a+\frac{\omega_d b}{d}\right)} \right\}
% \]
we complete the proof of \cref{thm:recsummui}.

The spectral splitting estimate in \cref{cor:recsumspecgap} now follows immediately from the triangle inequality
\[
    \big(\muu_d(\Omega)-\muu_1(\Omega)\big)^2 \leq 2\big(\muu_1(\Omega)-\muu_1(\B)\big)^2 + 2\big(\muu_d(\Omega)-\muu_d(\B)\big)^2 \leq \frac{4}{\gamma_d} \D.
\]

\subsection{The sum of the first \texorpdfstring{$d$}{d} eigenvalues}

We now prove \cref{thm:sumstability}. By scaling invariance, we may assume that $|\Omega|=\omega_d$. 

We first assume that $\s\leq \mathcal{S}(\B) $. By the arithmetic--harmonic mean inequality,
\[
    \frac{1}{d}\sum_{i=1}^d\frac{1}{\mu_i(\Omega)} \geq \left( \frac{1}{d}\sum_{i=1}^d\mu_i(\Omega) \right)^{-1} = \frac{d}{\s}.
\]
It follows that
\[
    \D \geq \frac{d^2}{\s} - \frac{d}{\mu_1(\B)} = \frac{d (\mathcal{S}(\B)-\s)}{\mu_1(\B) \s}.
\]
Thus,
\begin{equation}
    \label{eq:negativeside}
    \mathcal{S}(\B)-\s \leq \mu_1(\B)^2\D.
\end{equation}

Let $\varepsilon_0$ be as in \cref{eq:epsilon1}. We next assume that $\s> \mathcal{S}(\B) $ and $\D\leq \varepsilon_0$. By \cref{eq:mup}, we have
\[
    \s- \mathcal{S}(\B)  \leq \sum_{i=1}^d p_i- \mu_1(\B) d  = \Tr (A^{-1}B)- \mu_1(\B) d .
\]
By \cref{eq:AM,eq:ziabbound}, we have $A\geq M\geq \frac{a}{2}I>0$. Together with \cref{eq:BN}, this gives
\[
    \s- \mathcal{S}(\B)  \leq \Tr (M^{-1}N)- \mu_1(\B) d= -b(\mu_1(\B)+1) \sum_{i=1}^d \frac{z_i}{a+bz_i}.
\]
Since $\sum_i z_i=0$ and $M\geq \frac{a}{2}I$, \cref{eq:Dlower} yields
\[
    -\sum_{i=1}^d \frac{z_i}{a+bz_i} = \frac{b}{a} \sum_{i=1}^d \frac{z_i^2}{a+bz_i} \leq \frac{2b}{a^2} \sum_{i=1}^d z_i^2 \leq \frac{2b}{c_3 a^2}\D.
\]
Hence,
\begin{equation}
    \label{eq:positiveside}
    \s- \mathcal{S}(\B)  \leq \frac{2b^2(\mu_1(\B)+1)}{c_3 a^2}\D.
\end{equation}

It remains to treat the case $\D>\varepsilon_0$. By \cref{eq:Krogerup},
\begin{equation}
    \label{eq:largedeficit}
    |\s- \mathcal{S}(\B) | \leq d(K_d+\mu_1(\B)) \leq d\varepsilon_0^{-1}(K_d+\mu_1(\B))\D.
\end{equation}

Combining \cref{eq:negativeside,eq:positiveside,eq:largedeficit}, we obtain
\[
    \D\geq \tau_d |\s- \mathcal{S}(\B) |,
\]
where
\begin{equation}
    \label{eq:taud}
    \tau_d=\min\left\{\frac{1}{\mu_1(\B)^2}, \frac{c_3 a^2}{2b^2 (\mu_1(\B)+1)}, \frac{\varepsilon_0}{d(K_d+\mu_1(\B))}\right\}.
\end{equation}
% Equivalently, after substituting the definitions of $c_3$ and $\varepsilon_0$,
% \[
%     \tau_d=\min\left\{\frac{1}{\lambda^2}, \frac{1}{2\lambda^2 \left(\lambda +\frac{\omega_d b}{da}\right)}, \frac{1}{d(K_d+\lambda)}, \frac{\lambda+1}{4d\lambda^2(K_d+\lambda) \left(\lambda +\frac{\omega_d b}{da}\right)} \right\}.
% \]
This completes the proof of \cref{thm:sumstability}.

\section{Stability quantified by the Fraenkel asymmetry}

In this section, we prove \cref{thm:recsumfraenkel} by retaining the positive-semidefinite remainder in the comparison for the energy matrix and estimating it through a mass-transfer argument.

By scaling invariance, we may assume that $|\Omega|=\omega_d$. We again adopt the notation in \cref{eq:notation} and the assumption \cref{eq:diagassumption}. It follows from \cref{eq:Bexp} that
\[
    B=N-Q,
\]
where
\begin{equation}
    \label{eq:Q}
    Q = \int_{\B\backslash\Omega} G'(r)^2\theta\theta^T \dd x + G(1)^2\int_{\Omega\backslash \B} (1-\frac{1}{r^2})(I-\theta\theta^T) \dd x + \int_{\B\backslash\Omega} (\frac{G(r)^2}{r^2}-G(1)^2)(I-\theta\theta^T) \dd x.
\end{equation}
% The mass matrix similarly satisfies
% \[
%     A=M+P,
% \]
% where
% \[
%     P = \int_{\B\backslash\Omega} (G(1)^2-G(r)^2)\theta\theta^T \dd x.
% \]
Since $B>0$ and $Q=N-B\geq 0$, we have $N>0$. Moreover,
\[
    N^{\frac{1}{2}} \left(I-N^{-\frac{1}{2}}QN^{-\frac{1}{2}}\right) N^{\frac{1}{2}}=N-Q=B>0,
\]
and hence $N^{-\frac{1}{2}}QN^{-\frac{1}{2}}<I$. Combining this with the positive semidefiniteness of $N^{-\frac{1}{2}}QN^{-\frac{1}{2}}$, we obtain
\begin{equation}
\label{eq:Binverselower}
\begin{aligned}
    B^{-1} %& = N^{-\frac{1}{2}} \left(I-N^{-\frac{1}{2}}QN^{-\frac{1}{2}}\right)^{-1} N^{-\frac{1}{2}}\\
      \geq N^{-\frac{1}{2}} \left(I+N^{-\frac{1}{2}}QN^{-\frac{1}{2}}\right) N^{-\frac{1}{2}}  = N^{-1}+N^{-1}QN^{-1}.
\end{aligned}
\end{equation}

We distinguish two cases. First suppose that $\max_{i} |z_i| \geq \frac{a}{2b}$. Since the Fraenkel asymmetry satisfies $\A\leq 2$, \cref{eq:Dlower} yields
\[
    \D \geq \frac{c_3a^2}{4b^2} \geq \frac{c_3a^2}{16b^2}\A^2.
\]

It remains to consider the case $\max_{i} |z_i| < \frac{a}{2b}$. Then
\begin{equation}
    \label{eq:Alower}
    A\geq M=aI+bZ\geq \frac{a}{2} I.
\end{equation}
By \cref{eq:recsump,eq:Binverselower,eq:Alower}, together with the inequality $\Tr(N^{-1}M)\geq \frac{d}{\lambda}$ from \cref{eq:NinvMequiv2}, we obtain
\begin{equation}
    \label{eq:BinvAlower}
    \D\geq \Tr (B^{-1}A)-\frac{d}{\lambda} \geq \Tr (N^{-1}M)-\frac{d}{\lambda} + \Tr (N^{-1}QN^{-1}A) \geq \frac{a}{2}\Tr (N^{-1}QN^{-1}).
\end{equation}
The inequality $\max_{i} |z_i| < \frac{a}{2b}$ gives
\[
    N=\lambda aI-bZ \leq \left(\lambda+1\right) aI.
\]
Consequently,
\[
    N^{-1}\geq \frac{1}{(\lambda+1)a} I.
\]
Substituting this estimate into \cref{eq:BinvAlower} gives
\begin{equation}
    \label{eq:BinvAlowerQ}
    \D \geq \frac{a}{2}\Tr (QN^{-2}) \geq \frac{1}{2(\lambda+1)^2a}\Tr Q.
\end{equation}

We next derive a lower bound for $\Tr Q$.  The first two terms in the definition of $Q$ are positive semidefinite. Indeed, this is immediate for the first term, while the coefficient in the second term is nonnegative on $\Omega\backslash\B$. Taking traces in \cref{eq:Q} and discarding the first two nonnegative terms, we obtain
\begin{equation}
    \label{eq:TrQ}
    \Tr Q\geq (d-1)\int_{\B\setminus\Omega} \left(\frac{G(r)^2}{r^2}-G(1)^2\right)\,\dd x.
\end{equation}
The required lower bound for the integrand is provided by the following lemma.

\begin{lemma}
    \label{lem:radialremainder}
    On $(0,1)$,
    \begin{equation}
        \label{eq:Gr-G1}
        \frac{G(r)^2}{r^2}-G(1)^2\geq J_\nu(p_{\nu,1})^2(1-r).
    \end{equation}
\end{lemma}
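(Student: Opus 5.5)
The plan is to work with $h(r):=G(r)/r=r^{-\nu}J_\nu(p r)$ on $(0,1]$, where $p=p_{\nu,1}$. Then $G(1)=h(1)=J_\nu(p)$, and the claim reads
\[
h(r)^2-h(1)^2\ \ge\ h(1)^2(1-r),\qquad 0<r<1 .
\]
I would reduce this to a first-order bound for $h$ itself. Since $h$ is positive and strictly decreasing on $(0,1]$ (as recalled in the sketch of \cref{thm:HLT}),
\[
h(r)^2-h(1)^2=(h(r)+h(1))(h(r)-h(1))\ \ge\ 2h(1)\,(h(r)-h(1)).
\]
So it suffices to prove $h(r)-h(1)\ge \tfrac12 h(1)(1-r)$. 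This leaves a factor $2$ of slack, which I would keep in reserve.

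\emph{Step 1: the slope of $h$ at $r=1$.} Use the Bessel identity $\frac{d}{dx}\big(x^{-\nu}J_\nu(x)\big)=-x^{-\nu}J_{\nu+1}(x)$ to get $h'(s)=-p\,s^{-\nu}J_{\nu+1}(ps)$. The Neumann condition $g'(1)=0$, with $g=rh$, gives $h(1)+h'(1)=0$. Hence $pJ_{\nu+1}(p)=J_\nu(p)$ and $h'(1)=-J_\nu(p)$.

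\emph{Step 2: concavity of $h$ on $[0,1]$.} I want $s\mapsto s^{-\nu}J_{\nu+1}(ps)$ to be nondecreasing on $(0,1)$. Differentiating and using $xJ_{\nu+1}'(x)=xJ_\nu(x)-(\nu+1)J_{\nu+1}(x)$ with $x=ps$, the derivative equals $s^{-\nu-1}\big[xJ_\nu(x)-(2\nu+1)J_{\nu+1}(x)\big]$. So I need
\[
\frac{J_{\nu+1}(x)}{xJ_\nu(x)}\le \frac1{d+1}\qquad\text{for } 0<x<p .
\]
I would invoke the classical fact that $x\mapsto J_{\nu+1}(x)/(xJ_\nu(x))$ is increasing on $(0,j_{\nu,1})$; this follows from the Mittag-Leffler expansion $\frac{J_{\nu+1}(x)}{J_\nu(x)}=\sum_k\frac{2x}{j_{\nu,k}^2-x^2}$. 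Note also $p<j_{\nu,1}$. With this monotonicity it is enough to check the endpoint $x=p$. There Step 1 gives the ratio $1/p^2$, so the condition becomes $p^2=\mu_1(\B)\ge d+1$. I expect this to be checkable from the known bounds on $p_{\nu,1}$: for example $\mu_1(\B)\approx 3.39$ when $d=2$, and $p_{\nu,1}$ grows like $\nu$ in general.

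\emph{Step 3: conclusion.} Concavity gives $h(r)\ge h(1)+h'(1)(r-1)=h(1)+J_\nu(p)(1-r)$. Hence $h(r)-h(1)\ge h(1)(1-r)$, which is twice what is needed, and the reduction in the first paragraph finishes the proof.

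The main obstacle is Step 2, namely the global concavity of $h$, or equivalently the inequality $xJ_\nu(x)\ge (d+1)J_{\nu+1}(x)$ on $(0,p)$. If the monotonicity or endpoint check proves awkward in some dimensions, I would use the factor-$2$ slack instead. In that case it suffices to show $h(r)-h(1)\ge\tfrac12 J_\nu(p)(1-r)$, that is, $\int_r^1 p\,s^{-\nu}J_{\nu+1}(ps)\,ds\ge \tfrac12\int_r^1 pJ_{\nu+1}(p)\,ds$. This would follow from the weaker pointwise bound $s^{-\nu}J_{\nu+1}(ps)\ge\tfrac12 J_{\nu+1}(p)$ near $s=1$, combined with the chord inequality $h(r)-h(1)\ge (h(0)-h(1))(1-r)$ for $r$ away from $1$.
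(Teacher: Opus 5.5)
\textbf{There is a genuine gap at the endpoint check in Step 2.} The following parts of your argument are correct:
\begin{itemize}
\item the reduction to $h(r)-h(1)\ge\frac12 h(1)(1-r)$;
\item Step 1;
\item the derivative computation in Step 2;
\item the use of the Mittag-Leffler expansion to reduce concavity of $h$ to the single endpoint condition $\mu_1(\B)=p_{\nu,1}^2\ge d+1$.
\end{itemize}
The whole argument rests on that endpoint condition, and you do not prove it.

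Your supporting heuristic is wrong. It is $j_{\nu,1}$ and $j'_{\nu,1}$ that grow like $\nu$; $p_{\nu,1}$ only grows like $\sqrt{d}$. Expanding the identity $2\sum_k p_{\nu,1}^2/(j_{\nu,k}^2-p_{\nu,1}^2)=1$ in the Rayleigh sums $\sum_k j_{\nu,k}^{-2n}$ gives $p_{\nu,1}^2=d+1+\frac{2}{d}+O(d^{-2})$. So $\mu_1(\B)\ge d+1$ is asymptotically sharp, and no crude bound on $p_{\nu,1}$ will settle it for every $d$.

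In fact, using $xJ_\nu+xJ_{\nu+2}=2(\nu+1)J_{\nu+1}$, your condition $xJ_\nu(x)\ge(d+1)J_{\nu+1}(x)$ on $(0,p_{\nu,1})$ is equivalent to $p_{\nu,1}\le p_{\nu+1,1}$. That is, $\mu_1$ of the unit ball does not decrease from dimension $d$ to $d+2$. This agrees with the numerics, but it is a separate nontrivial statement.

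The fallback does not repair the gap. The chord inequality $h(r)-h(1)\ge(h(0)-h(1))(1-r)$ is itself a consequence of concavity, so invoking it is circular. The split into $s$ near $1$ and $r$ away from $1$ is also never quantified.

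\textbf{How to close it.} What is missing is a lower bound for $-h'$ on all of $(0,1)$ that uses no information about the size of $p=p_{\nu,1}$. Such a bound is available within your own setup. Since $p<j_{\nu,1}<j_{\nu+2,1}$, we have $\frac{d}{dx}\big(x^{-\nu-1}J_{\nu+1}(x)\big)=-x^{-\nu-1}J_{\nu+2}(x)<0$ on $(0,p]$. Hence $s^{-\nu}J_{\nu+1}(ps)\ge s\,J_{\nu+1}(p)$ for $0<s\le 1$, and therefore
\[
h(r)-h(1)=\int_r^1 p\,s^{-\nu}J_{\nu+1}(ps)\,ds\ \ge\ pJ_{\nu+1}(p)\,\frac{1-r^2}{2}=\frac{J_\nu(p)}{2}(1-r^2)\ \ge\ \frac{J_\nu(p)}{2}(1-r).
\]
This is exactly what your first reduction needs.

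It is the same intermediate inequality $h(r)/h(1)\ge 1+\frac{1-r^2}{2}$ that the paper obtains by a different route. The Hadamard product gives $h(r)/h(1)=\prod_k\big(1+\frac{p^2(1-r^2)}{j_{\nu,k}^2-p^2}\big)\ge 1+(1-r^2)\sum_k\frac{p^2}{j_{\nu,k}^2-p^2}$. The sum equals $\frac12$ by your Step 1 relation $pJ_{\nu+1}(p)=J_\nu(p)$, written in Mittag-Leffler form.

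Both versions spend exactly the factor-$2$ slack you had reserved. The tangent-line bound from concavity is stronger than the lemma requires, and that extra strength is precisely where the delicate inequality $\mu_1(\B)\ge d+1$ enters.
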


Combining \cref{eq:BinvAlowerQ,eq:TrQ,eq:Gr-G1}, we find that
\begin{equation}
    \label{eq:D1-r}
    \D \geq \frac{(d-1) J_\nu(p_{\nu,1})^2}{2(\lambda+1)^2a} \int_{\B\setminus\Omega}(1-r)\,\dd x.
\end{equation}
Let $B_{R_0}$ be the ball centered at the origin with radius $R_0 = \big(\frac{|\B\cap \Omega|}{\omega_d}\big)^{1/d}$. Then $|\B\backslash B_{R_0}|=|\B\backslash \Omega|$, and hence $|B_{R_0}\backslash\Omega|=|(\B\backslash B_{R_0})\cap\Omega|$. Together with the monotonicity of $1-r$, this yields
\begin{equation}
\label{eq:massdispla}
\begin{aligned}
    \int_{\B\backslash\Omega} (1-r)\,\dd x & = \int_{B_{R_0}\backslash\Omega}(1-r)\,\dd x + \int_{(\B\backslash B_{R_0})\backslash\Omega}(1-r)\,\dd x \\
    & \geq \int_{(\B\backslash B_{R_0})\cap\Omega}(1-r)\,\dd x + \int_{(\B\backslash B_{R_0})\backslash\Omega}(1-r)\,\dd x \\
    & = \int_{\B\backslash B_{R_0}} (1-r)\,\dd x.
\end{aligned}
\end{equation}
Passing to polar coordinates, we obtain
\[
    \int_{\B\backslash B_{R_0}} (1-r)\,\dd x = d\omega_d \int_{R_0}^{1} (1-r)r^{d-1} \,\dd r = \omega_d \int_{0}^{1-R_0^d}\big(1-(1-t)^{\frac{1}{d}}\big) \,\dd t.
\]
Since the map $s\mapsto s^{\frac{1}{d}}$ is concave, we have $(1-t)^{\frac{1}{d}}\leq 1-\frac{t}{d}$ on $(0,1)$. Moreover, the definition of the Fraenkel asymmetry gives $|\B\backslash\Omega|\geq\frac{\omega_d}{2}\A$. It follows that
\[
    \int_{\B\backslash B_{R_0}} (1-r)\,\dd x \geq \frac{\omega_d}{2d}(1-R_0^d)^2 = \frac{1}{2d\omega_d}|\B\backslash\Omega|^2\geq \frac{\omega_d}{8d}\A^2.
\]
Combining this inequality with \cref{eq:D1-r,eq:massdispla}, we arrive at
\[
    \D \geq \frac{(d-1)\omega_d J_{\nu}(p_{\nu,1})^2}{16d(\lambda+1)^2a}\A^2.
\]
Setting
\begin{equation}
    \label{eq:kappad}
    \kappa_d = \min\left\{\frac{c_3a^2}{16b^2}, \frac{(d-1)\omega_d J_{\nu}(p_{\nu,1})^2}{16d(\mu_1(\B)+1)^2a}\right\},
\end{equation}
we obtain \cref{eq:FraenkelStability}. 

It remains to prove the radial estimate.

\begin{proof}[Proof of \cref{lem:radialremainder}]
    Let $j_{\nu,k}$ denote the positive zeros of $J_\nu$. The Hadamard product representation
    \begin{equation}
        \label{eq:Hadamardproduct}
        J_{\nu}(s)=\frac{(s/2)^\nu}{\Gamma(\nu+1)}\prod_{k=1}^{\infty}\left(1-\frac{s^2}{j_{\nu,k}^2}\right)
    \end{equation}
    gives
    \[
        \frac{g(r)}{r}=\frac{p_{\nu,1}^\nu}{2^\nu\Gamma(\nu+1)}\prod_{k=1}^{\infty}\left(1-\frac{p_{\nu,1}^2r^2}{j_{\nu,k}^2}\right).
    \]
    The function $s\mapsto s^{1-\nu}J_\nu(s)$ extends continuously to $s=0$, vanishes at both $0$ and $j_{\nu,1}$, and is positive on $(0,j_{\nu,1})$. Hence, %as the first positive   zero of the derivative,  
    \[
        0<p_{\nu,1}<j_{\nu,1}.
    \]
    Consequently,
    \[
        j_{\nu,k}^2-p_{\nu,1}^2>0,
        \qquad k\geq 1.
    \]
    Therefore,
    \begin{equation}
        \label{eq:grproduct}
        \frac{g(r)}{rg(1)} = \prod_{k=1}^{\infty} \left( 1+\frac{p_{\nu,1}^2(1-r^2)}{j_{\nu,k}^2-p_{\nu,1}^2} \right) \geq 1+(1-r^2)\sum_{k=1}^{\infty} \frac{p_{\nu,1}^2}{j_{\nu,k}^2-p_{\nu,1}^2}.
    \end{equation}
    By the standard Bessel recurrence relation,
    \[
        \frac{\dd}{\dd s}\left(s^{1-\nu}J_\nu(s)\right) = s^{-\nu}\left(J_\nu(s)-sJ_{\nu+1}(s)\right).
    \]
    Thus, by the definition of $p_{\nu,1}$,
    \begin{equation}
        \label{eq:JnuJnu1}
        J_\nu(p_{\nu,1})=p_{\nu,1}J_{\nu+1}(p_{\nu,1}).
    \end{equation}
    On the other hand, logarithmic differentiation of \cref{eq:Hadamardproduct} yields
    \[
        \frac{J_{\nu}'(s)}{J_{\nu}(s)} = \frac{\nu}{s} - \sum_{k=1}^{\infty}\frac{2s}{j_{\nu,k}^2-s^2}.
    \]
    Combining this identity with the recurrence relation $J_{\nu}'(s)=\frac{\nu}{s}J_{\nu}(s) - J_{\nu+1}(s)$ gives
    \[
        \frac{J_{\nu+1}(s)}{J_\nu(s)} = 2s\sum_{k=1}^{\infty} \frac{1}{j_{\nu,k}^{2}-s^{2}}.
    \]
    Evaluating this identity at $s=p_{\nu,1}$ and using \cref{eq:JnuJnu1}, we obtain
    \begin{equation}
        \label{eq:pjsum1}
        1 = p_{\nu,1}\frac{J_{\nu+1}(p_{\nu,1})}{J_\nu(p_{\nu,1})} = 2\sum_{k=1}^{\infty} \frac{p_{\nu,1}^2}{j_{\nu,k}^2-p_{\nu,1}^2}.
    \end{equation}
    Consequently, \cref{eq:grproduct,eq:pjsum1} imply that
    \[
        \frac{g(r)}{rg(1)} \geq 1+\frac{1-r^2}{2}.
    \]
    It follows that
    \[
        \frac{g(r)^2}{r^2}-g(1)^2 \geq g(1)^2\left[\left(1+\frac{1-r^2}{2}\right)^2-1\right] \geq g(1)^2(1-r^2) \geq g(1)^2(1-r).
    \]
    Since $g(1)=J_\nu(p_{\nu,1})$, the lemma follows.
\end{proof}

\section{Spectral-gap estimates from the Szeg\H{o}--Weinberger deficit}

In this section, we estimate the splitting of the first nonzero Neumann eigenvalue cluster directly in terms of the Szeg\H{o}--Weinberger deficit. The first result controls the gap between the first two nonzero eigenvalues, while the second controls the full width of the cluster. Both follow from \cref{thm:sumstability} combined with universal upper bounds for higher Neumann eigenvalues due to Bucur and Henrot \cite{BH19} and Kr\"oger \cite{Kro92}.

Throughout this section, we write
\[
    \alpha_d=\tau_d\mu_1(\B)^2.
\]
We first prove \cref{thm:mu2}. By scaling invariance, we may assume that $|\Omega|=\omega_d$. If $\mu_1(\Omega)=\mu_1(\B)$, then $\Omega$ is a ball by the equality characterization in the Szeg\H{o}--Weinberger inequality, and the conclusion follows. We may therefore suppose that $0<\mu_1(\Omega)<\mu_1(\B)$. Define
\[
    \rho=\frac{\mu_1(\Omega)}{\mu_1(\B)},\qquad \delta=\frac{\mu_2(\Omega)-\mu_1(\Omega)}{\mu_1(\B)-\mu_1(\Omega)}.
\]
Let $C_1$ be the larger root of
\begin{equation}
    \label{eq:C4}
    (d-1)(1+\alpha_d 2^{\frac{2}{d}} )s^2-\big[2^{\frac{2}{d}} d +d-1+\alpha_d 2^{\frac{2}{d}} \big(d+(d-1) 2^{\frac{2}{d}} \big)\big]s+2^{\frac{2}{d}} d (1+\alpha_d 2^{\frac{2}{d}} )=0.
\end{equation}
Evaluating the polynomial on the left at $2^{\frac{2}{d}}$, $\frac{d}{d-1}$, and $\frac{2^{\frac{2}{d}}d}{d-1}$ gives
\begin{equation}
    \label{eq:C4bounds}
    \max\left\{2^{\frac{2}{d}},\frac{d}{d-1}\right\}<C_1<\frac{2^{\frac{2}{d}}d}{d-1}.
\end{equation}

Assume to the contrary that $\delta>C_1$. By \cref{eq:C4bounds}, we have $\delta>\frac{d}{d-1}$. It follows from \cref{thm:sumstability} that
\begin{equation}
    \label{eq:mu2Dlower}
    \D\geq\tau_d\big(\s-\mathcal{S}(\B)\big)\geq\tau_d\big((d-1)\delta-d\big)\big(\mu_1(\B)-\mu_1(\Omega)\big).
\end{equation}
On the other hand, by definition,  
\[
    \D\leq\frac{1}{\mu_1(\Omega)}+\frac{d-1}{\mu_2(\Omega)}-\frac{d}{\mu_1(\B)}.
\]
Since
\[
    \mu_1(\B)-\mu_1(\Omega)=\mu_1(\B)(1-\rho),\qquad \mu_2(\Omega)=\mu_1(\B)\big(\rho+\delta(1-\rho)\big),
\]
combining the preceding inequality with \cref{eq:mu2Dlower} gives
\begin{equation}
    \label{eq:mu2rho}
    \alpha_d\big((d-1)\delta-d\big)\leq\frac{1}{\rho}-\frac{(d-1)(\delta-1)}{\rho+\delta(1-\rho)}.
\end{equation}
The sharp Bucur--Henrot bound \cite{BH19} implies
\[
    \rho+\delta(1-\rho)=\frac{\mu_2(\Omega)}{\mu_1(\B)}\leq2^{\frac{2}{d}},
\]
and hence, since $\delta>C_1>2^{\frac{2}{d}}$,
\[
    \rho\geq\frac{\delta-2^{\frac{2}{d}}}{\delta-1}.
\]
The right-hand side of \cref{eq:mu2rho} is decreasing in $\rho$. We therefore obtain
\[
    \alpha_d\big((d-1)\delta-d\big)\leq\frac{(\delta-1)\big(2^{\frac{2}{d}}d-(d-1)\delta\big)}{2^{\frac{2}{d}}\big(\delta-2^{\frac{2}{d}}\big)}.
\]
This becomes
\[
    (d-1)(1+\alpha_d 2^{\frac{2}{d}} )\delta^2-\big[2^{\frac{2}{d}} d +d-1+\alpha_d 2^{\frac{2}{d}} \big(d+(d-1) 2^{\frac{2}{d}} \big)\big]\delta+ 2^{\frac{2}{d}} d (1+\alpha_d 2^{\frac{2}{d}} )\leq0.
\]
This is impossible because $C_1$ is the larger root of the polynomial on the left and $\delta>C_1$. Hence $\delta\leq C_1$, which proves \cref{thm:mu2}.

We next prove \cref{thm:mud}. Assume that $d\geq3$, and write
\[
    L_d=\frac{K_d}{\mu_1(\B)}= \frac{4\pi^2}{\mu_1(\B)} \left(\frac{d(d+2)}{2\omega_d^2}\right)^{\frac{2}{d}}.
\]
Let $C_2$ be the larger root of
\begin{equation}
    \label{eq:C5}
    (1+\alpha_dL_d)s^2-\big[dL_d+1+\alpha_dL_d(d+L_d)\big]s+dL_d(1+\alpha_dL_d)=0.
\end{equation}
As above, evaluating the polynomial on the left at $L_d$, $d$, and $dL_d$ gives
\begin{equation}
    \label{eq:C5bounds}
    \max\{L_d,d\}<C_2<dL_d.
\end{equation}

As in the proof of \cref{thm:mu2}, we normalize $|\Omega|=\omega_d$ and only need to consider the case $0<\mu_1(\Omega)<\mu_1(\B)$. Define
\[
    \rho=\frac{\mu_1(\Omega)}{\mu_1(\B)},\qquad \widetilde{\delta}=\frac{\mu_d(\Omega)-\mu_1(\Omega)}{\mu_1(\B)-\mu_1(\Omega)}.
\]
Assume to the contrary that $\widetilde{\delta}>C_2$. By \cref{eq:C5bounds}, we have $\widetilde{\delta}>d$. The same ordering argument as above, combined with \cref{thm:sumstability}, gives
\[
    \D\geq\tau_d\big(\s-\mathcal{S}(\B)\big)\geq\tau_d(\widetilde{\delta}-d)\big(\mu_1(\B)-\mu_1(\Omega)\big),
\]
while
\[
    \D\leq\frac{d-1}{\mu_1(\Omega)}+\frac{1}{\mu_d(\Omega)}-\frac{d}{\mu_1(\B)}.
\]
Then we can arrive at a contradiction by using exactly the same argument as in the proof of Theorem \ref{thm:mu2}, the only difference being replacing Bucur-Henrot bound by  Kr\"oger's bound in \cref{eq:Krogerup}. This finishes the proof of \cref{thm:mud}.
\iffalse 
Since
\[
    \mu_1(\B)-\mu_1(\Omega)=\mu_1(\B)(1-\rho),\qquad \mu_d(\Omega)=\mu_1(\B)\big(\rho+\widetilde{\delta}(1-\rho)\big),
\]
we obtain
\begin{equation}
    \label{eq:mudrho}
    \alpha_d(\widetilde{\delta}-d)\leq\frac{d-1}{\rho}-\frac{\widetilde{\delta}-1}{\rho+\widetilde{\delta}(1-\rho)}.
\end{equation}
Kr\"oger's bound in \cref{eq:Krogerup} implies
\[
    \rho+\widetilde{\delta}(1-\rho)=\frac{\mu_d(\Omega)}{\mu_1(\B)}\leq L_d,
\]
and hence, since $\widetilde{\delta}>C_2>L_d$,
\[
    \rho\geq\frac{\widetilde{\delta}-L_d}{\widetilde{\delta}-1}.
\]
The right-hand side of \cref{eq:mudrho} is decreasing in $\rho$. Therefore
\[
    \alpha_d(\widetilde{\delta}-d)\leq\frac{(\widetilde{\delta}-1)(dL_d-\widetilde{\delta})}{L_d(\widetilde{\delta}-L_d)}.
\]
Equivalently,
\[
    (1+\alpha_dL_d)\widetilde{\delta}^2-\big[dL_d+1+\alpha_dL_d(d+L_d)\big]\widetilde{\delta}+dL_d(1+\alpha_dL_d)\leq0,
\]
which contradicts the definition of $C_2$. Hence $\widetilde{\delta}\leq C_2$, which proves \cref{thm:mud}.
\fi

\section{Sharpness of the exponents}

In this section, we prove that all the exponents appearing in the stability results established in the preceding sections are optimal.

Let $\phi$ be a smooth function on $\mathbb{S}^{d-1}$ satisfying
\begin{equation}
    \label{eq:phi01sphereharmonics}
    \ints \phi \,\dd \sigma= 0, \qquad \ints \phi \theta\,\dd \sigma =0.
\end{equation}
In a neighborhood of $\mathbb{S}^{d-1}$, define $\vv(x)=\phi(\frac{x}{r})\frac{x}{r}$ and extend it to a smooth vector field in a neighborhood of $\B$. The normal component of $\vv|_{\mathbb{S}^{d-1}}$ is then given by $V_n=\vv\cdot\nn = \phi$. Define $T_t = \text{Id}+t\vv$ and $\Omega_t=T_t(\B)$. Then
\[
    \partial\Omega_t = \{(1+t\phi(\theta))\theta: \theta\in\mathbb{S}^{d-1}\}.
\]
Moreover, the deformation $T_t$ depends analytically on $t$ near $t=0$, and $T_t:\B\to\Omega_t$ is a smooth diffeomorphism for any sufficiently small $|t|$. By the standard transport formula (see, for example, \cite[Chapter 9, \S4.1]{DZ11}),
\begin{equation}
    \label{eq:volder}
    \left.\frac{\dd}{\dd t}\right|_{t=0}|\Omega_t| = \ints \phi \,\dd \sigma= 0.
\end{equation}

After pulling the Neumann problem back to $\B$, standard analytic perturbation theory for boundary deformations (see, for example, \cite[Chapter VII, \S6.5]{Kat76}) yields $d$ analytic eigenvalue branches issuing from the first nonzero eigenvalue of $\B$. Denote these branches by $\mu_i(t)$, $i=1,\dots,d$. Thus,
\begin{equation}
    \label{eq:muit}
    \mu_i(t)=\lambda+\dot{\mu}_i(0)t+O(t^2).
\end{equation}
The branches $\mu_i(t)$ need not be ordered by magnitude. Standard shape derivative computations for multiple Neumann eigenvalues (see, for example, \cite[Proposition 2]{ZHL20}) identify the coefficients $\dot{\mu}_i(0)$ as the eigenvalues of the symmetric matrix $\hh$ with entries 
\[
    \hh_{ij} =  \ints (\nabla_\tau v_i\cdot\nabla_{\tau}v_j-\lambda v_iv_j)V_n \,\dd\sigma,
\]
where $v_1,\dots,v_d$ form an $L^2(\B)$-orthonormal basis of the eigenspace associated with $\lambda$. These eigenfunctions are given by $v_i=\sqrt{\frac{d}{\int_{\B}g^2~\dd x}}g(r)\frac{x_i}{r}$. On $\mathbb{S}^{d-1}$, since $\nabla_\tau\theta_i=e_i-\theta_i\theta$, where $e_i$ is the $i$-th coordinate vector, we have
\[
    \nabla_\tau v_i\cdot\nabla_\tau v_j-\lambda v_iv_j=\frac{dg(1)^2}{\int_{\B}g^2~\dd x}\big(\delta_{ij}-(\lambda+1)\theta_i\theta_j\big).
\]
Substituting this identity into the definition of $\hh$ and using $\ints\phi~\dd\sigma=0$, which eliminates the term containing $\delta_{ij}$, gives
\begin{equation}
    \label{eq:H1}
    \hh=-\frac{d(\lambda+1)g(1)^2}{\int_{\B}g^2~\dd x}\ints\phi\theta\theta^T~\dd\sigma.
\end{equation}
Consequently,
\[
    \sum_{i=1}^d \dot{\mu}_i(0) = \Tr\hh=-\frac{d(\lambda+1)g(1)^2}{\int_{\B}g^2\dd x}\ints \phi \,\dd\sigma = 0. 
\]
In view of \cref{eq:volder}, the scale normalization of the eigenvalues contributes no first-order term. After ordering the branches for $t>0$ and writing $\muu_i(t)=\muu_i(\Omega_t)$, we therefore obtain
\[
    \sum_{i=1}^d \muu_i(t) = d\lambda + O(t^2) 
\]
and
\[
    \sum_{i=1}^d \frac{1}{\muu_i(t)} = \frac{d}{\lambda} - \frac{1}{\lambda^2}\sum_{i=1}^d \dot{\mu}_i(0) t +O(t^2) = \frac{d}{\lambda} + O(t^2).
\]

To prove the optimality of the exponent $2$ in \cref{thm:recsummui,cor:recsumspecgap}, 
we first consider a perturbation that splits the eigenvalue cluster at first order. Choose
\[
    \phi=\frac{(d+2)\int_{\B}g^2~\dd x}{2d\omega_d(\lambda+1)g(1)^2}(\theta_1^2-\theta_d^2).
\]
The standard spherical moment identities
\[
    \ints\theta_i^4~\dd\sigma=\frac{3\omega_d}{d+2},\qquad \ints\theta_i^2\theta_j^2~\dd\sigma=\frac{\omega_d}{d+2}\quad(i\neq j)
\]
hold, while every mixed moment containing an odd power vanishes. It follows that
\[
    \hh=\diag(-1,0,\dots,0,1).
\]
Hence,
\[
    \muu_{1}(t)=\lambda-t+O(t^2), \quad \muu_d(t) = \lambda+t+O(t^2).
\]
Therefore, as $t\to 0+$,
\[
    \mathcal{D}(\Omega_t) = O(t^2), \quad \muu_1(\B)-\muu_1(\Omega_t)=t+ O(t^2) ,\quad \muu_d(\Omega_t)-\muu_1(\Omega_t) = 2t+O(t^2).
\]
Thus, for every exponent $\alpha<2$, as $t\to 0+$,
\[
    \frac{\mathcal{D}(\Omega_t)}{\big(\muu_1(\B)-\muu_1(\Omega_t)\big)^\alpha}\to 0,\quad \frac{\mathcal  D ({\Omega_t})}{\big(\muu_d(\Omega_t)-\muu_1(\Omega_t)\big)^\alpha}\to 0.
\]
These limits prove the optimality of the exponent $2$ in \cref{thm:recsummui,cor:recsumspecgap}. The same perturbation shows that the linear controls in \cref{thm:mu2,thm:mud} are optimal. Indeed, the classical Szeg\H{o}--Weinberger deficit and the relevant spectral gaps are all  comparable to $t$ as $t \to 0+$.

For the remaining sharpness statements, choose a nonzero smooth function $\phi$ that is orthogonal to all spherical harmonics of degrees at most $2$. In particular, it satisfies \cref{eq:phi01sphereharmonics}. The estimate for nearly spherical domains in \cite[Lemma 6.2]{BDR12} gives
\begin{equation}
    \label{eq:Fraenkelsimt}
    \mathcal{A}(\Omega_t)\asymp |t|.
\end{equation}
Moreover, \cref{eq:H1} gives $\hh=0$, and hence $\muu_i(t)=\lambda+O(t^2)$ for every $i=1,\dots,d$. It follows that
\begin{equation}
    \label{eq:Dt2}
    \mathcal{D}(\Omega_t)=O(t^2).
\end{equation}
Combining \cref{eq:FraenkelStability,eq:Fraenkelsimt,eq:Dt2}, we obtain
\[
    \mathcal{D}(\Omega_t)\asymp \mathcal{A}(\Omega_t)^2\asymp t^2.
\]
This proves that the quadratic dependence on the Fraenkel asymmetry is optimal.

The same family also establishes the optimality of the exponent in \cref{eq:sumstability}. By definition,
\[
\begin{aligned}
    \mathcal{D}(\Omega_t) & = \sum_{i=1}^d \left(\frac{1}{\muu_i(t)}-\frac{1}{\lambda}\right) \\
    & = \sum_{i=1}^d \left(-\frac{\muu_i(t)-\lambda}{\lambda^2}+\frac{(\muu_i(t)-\lambda)^2}{\lambda^2\muu_i(t)}\right)\\
    & = -\frac{1}{\lambda^2}\big(\mathcal{S}(\Omega_t)-\mathcal{S}(\B)\big) + \frac{1}{\lambda^2}\sum_{i=1}^d \frac{(\muu_i(t)-\lambda)^2}{\muu_i(t)}\\
    & = -\frac{1}{\lambda^2}\big(\mathcal{S}(\Omega_t)-\mathcal{S}(\B)\big) + O(t^4).
\end{aligned}
\]
Since $\mathcal{D}(\Omega_t)\asymp t^2$, it follows that
\[
    \mathcal{S}(\B)-\mathcal{S}(\Omega_t)\asymp t^2.
\]
Thus the linear control in \cref{eq:sumstability} is optimal.

\section{Improved constraints on the \texorpdfstring{$(\muu_1,\muu_2)$}{(mu1, mu2)}-image}

Inequalities involving several eigenvalues simultaneously may be viewed as constraints on their joint spectral image. Separate isoperimetric inequalities provide coordinatewise constraints, whereas the reciprocal-sum inequality couples the eigenvalues and excludes additional parts of the region allowed by those separate bounds. The matrix argument underlying the reciprocal-sum inequality contains still finer joint information.

%In dimension two, the matrix $Z$ is determined, up to an orthogonal change of coordinates, by a single scalar parameter. Retaining this parameter throughout the argument yields an additional explicit constraint on the pair $(\muu_1,\muu_2)$. Combined with the previously known inequalities, this constraint improves the description of the joint spectral image.

More precisely, consider the normalized spectral image
\[
    \mathfrak{I}_2 := \left\{ \big(\muu_1(\Omega),\muu_2(\Omega)\big): \Omega\subset\mathbb{R}^2 \text{ is a bounded Lipschitz domain} \right\}.
\]
By scaling invariance, we may assume that $|\Omega|=\pi$, so that
$\muu_i(\Omega)=\mu_i(\Omega)$. Set
\[
    x=\muu_1(\Omega),\qquad y=\muu_2(\Omega),\qquad \lambda=\mu_1(\B)\approx3.3899577167.
\]
The ordering of the eigenvalues, the Szeg\H{o}--Weinberger inequality \cite{Wein56}, and the sharp Bucur--Henrot bound \cite{BH19} give
\begin{equation}
    \label{eq:image-basic-bounds}
    0<x\leq y,\qquad x\leq\lambda,\qquad y\leq 2\lambda.
\end{equation}
Moreover, the reciprocal-sum inequality \cref{thm:HLT} reads
\begin{equation}
    \label{eq:image-deficit}
    \D=\frac{1}{x}+\frac{1}{y}-\frac{2}{\lambda}\geq0.
\end{equation}

We now retain the full information contained in the matrix argument. Write $Z=\diag(z,-z)$. After interchanging the two coordinates if necessary, we may assume that $z\geq0$, and we set $t=\frac{bz}{a}$. Since $N>0$, we have $0\leq t<\lambda$. A direct computation gives
\begin{equation}
    \label{eq:image-Ft}
    \D \geq \Tr(N^{-1}M)-\frac{2}{\lambda} =\frac{2(\lambda+1)t^2} {\lambda(\lambda^2-t^2)} =:F(t).
\end{equation}
Notice that $F$ is increasing on $[0,\lambda)$.

Suppose first that $y\geq\lambda$. If $t<1$, then $M>0$, and the variational estimate for the second Ritz value gives
\[
    y\leq p_2 \leq \frac{\lambda+t}{1-t}.
\]
Equivalently,
\begin{equation}
    \label{eq:image-t-lower}
    t\geq \frac{y-\lambda}{y+1}.
\end{equation}
The same conclusion remains valid when $t\geq1$. Indeed,
\cref{eq:image-basic-bounds} implies
\[
    0\leq\frac{y-\lambda}{y+1} \leq\frac{\lambda}{2\lambda+1} <1\leq t.
\]
Thus \cref{eq:image-t-lower} holds in both cases. Combining it with the monotonicity of $F$ in \cref{eq:image-Ft}, we obtain
\begin{equation}
    \label{eq:image-mu2-refined}
    \D\geq F\left(\frac{y-\lambda}{y+1}\right) = \frac{2(y-\lambda)^2} {\lambda y\big((\lambda-1)y+2\lambda\big)}.
\end{equation}

Substituting \cref{eq:image-deficit} into \cref{eq:image-mu2-refined} and clearing the positive denominators shows that
\begin{equation}
    \label{eq:image-bilinear}
    2xy\leq(\lambda-1)(x+y)+2\lambda.
\end{equation}
Although \cref{eq:image-mu2-refined} was derived under the assumption $y\geq\lambda$, the bilinear inequality \cref{eq:image-bilinear} also holds automatically when $y\leq\lambda$. Indeed, for fixed $y\leq\lambda$, the expression
\[
    (\lambda-1)(x+y)+2\lambda-2xy
\]
is affine in $x\in(0,y]$. Its values at $x=0$ and $x=y$ satisfy
\[
    (\lambda-1)y+2\lambda>0, \qquad 2(\lambda-y)(y+1)\geq0.
\]
Consequently, \cref{eq:image-bilinear} holds for every admissible pair $(x,y)$.

Solving \cref{eq:image-bilinear} for $y$ gives
\[
    y\leq \frac{(\lambda-1)x+2\lambda} {2x+1-\lambda}
\]
whenever $x>\frac{\lambda-1}{2}$. Since
\[
    \frac{2\lambda^2}{3\lambda+1}-\frac{\lambda-1}{2}=\frac{(\lambda+1)^2}{2(3\lambda+1)}>0,
\]
the preceding division is legitimate throughout the second branch in \cref{prop:2dimmu2upper}. The equality statement follows by tracing the equality cases in the preceding argument, as in the proof of \cref{thm:HLT}, and by using  the equality characterization in the Bucur--Henrot inequality. This proves \cref{prop:2dimmu2upper}.

Finally, we explain the specific values  appearing in Figure \ref{fig:mu1mu2-image}. 
The graph of the second branch in  \cref{prop:2dimmu2upper} intersects the Bucur--Henrot bound $y=2\lambda$ at
\[
    x_*=\frac{2\lambda^2}{3\lambda+1} \approx2.0576443737.
\]
It follows that the $(\muu_1,\muu_2)$-image is contained in
\begin{equation}
    \label{eq:image-refined-region}
    \left\{ (x,y): 0<x\leq\lambda,~x\leq y\leq U_*(x) \right\},
\end{equation}
where
\begin{equation}
    \label{eq:image-improved-upper-bound}
    U_*(x)=
    \begin{cases}
        2\lambda, & 0<x\leq \dfrac{2\lambda^2}{3\lambda+1},\\[3mm]
        \dfrac{(\lambda-1)x+2\lambda} {2x+1-\lambda}, & \dfrac{2\lambda^2}{3\lambda+1} \leq x\leq\lambda.
    \end{cases}
\end{equation}

For comparison, combining the reciprocal-sum inequality \cref{eq:image-deficit} only with the bound $y\leq2\lambda$ gives
\begin{equation*}
    % \label{eq:image-reciprocal-upper-bound}
    U_{\mathrm{rec}}(x)=
    \begin{cases}
        2\lambda, & 0<x\leq\dfrac{2\lambda}{3},\\[3mm]
        \dfrac{\lambda x}{2x-\lambda}, & \dfrac{2\lambda}{3}\leq x\leq\lambda.
    \end{cases}
\end{equation*}
The relevant points in \cref{fig:mu1mu2-image} are
\[
    P_*= \left( \frac{2\lambda^2}{3\lambda+1},2\lambda \right) \approx(2.0576,6.7799),
\]
\[
    P_0= \left( \frac{2\lambda}{3},2\lambda \right) \approx(2.2600,6.7799),
\]
and
\[
    Q= \left( \frac{2\lambda}{3}, \frac{2\lambda(\lambda+2)}{\lambda+3} \right) \approx(2.2600,5.7189).
\]
 
The preceding constraints exclude a nontrivial part of the region allowed by the previously known inequalities, but they do not characterize which points of the remaining region are realized by domains. This raises the natural problem of describing the spectral image $\mathfrak{I}_2$ precisely.

 \section*{Declaration on the Use of Artificial Intelligence}
 
 During the preparation of this manuscript, the authors used ChatGPT for language polishing and grammar checking. ChatGPT was also used to assist in identifying the explicit constant appearing in Lemma \ref{lem:radialremainder}. The authors independently verified the mathematical derivation and the correctness of this constant, reviewed and edited all AI-assisted content, and take full responsibility for the content of the manuscript.

%%%%%%%%%%%%%%%%%%%%%%%%%%%%%%%%%%%%%%%%%%%%%%%%%%%%%%%%%%%%%%%%%%%%%%%%%%%%%%%%%%%%%%%%%%%%%

%%%%%%%%%%%%%%%%%%%%%%%%%%%%%%%%%%%%%%%%%%%%%%%%%%%%%%%%%%%%%%%%%%%%%%%%%%

%%%%%%%%%%%%%%%%%%%%%%%%%%%%%%%%%%%%%%%%%%%%%%%%%%%%%%%%%%%%%%%%%%%%%%%%%%%%%%%%%%%%%%%%%%%%%%%

% \bibliographystyle{amsplain}
% \bibliography{ref}

\providecommand{\bysame}{\leavevmode\hbox to3em{\hrulefill}\thinspace}
\providecommand{\MR}{\relax\ifhmode\unskip\space\fi MR }
% \MRhref is called by the amsart/book/proc definition of \MR.
\providecommand{\MRhref}[2]{%
  \href{http://www.ams.org/mathscinet-getitem?mr=#1}{#2}
}
\providecommand{\href}[2]{#2}

\end{document}